\newtheorem{theorem}{Theorem}[section]
\newtheorem{corollary}[theorem]{Corollary}
\newtheorem{proposition}[theorem]{Proposition}
\newtheorem{definition}[theorem]{Definition}
\newtheorem{remark}[theorem]{Remark}
\newcommand{\beqa}{\begin{eqnarray*}}
\newcommand{\eeqa}{\end{eqnarray*}}
\DeclareMathOperator*{\Airy}{\mbox{Ai}}
\newcommand{\field}[1]{\mathbb{#1}}
\newcommand{\bR}{\field{R}}        
\newcommand{\bN}{\field{N}}        
\newcommand{\bZ}{\field{Z}}        
\newcommand{\bC}{\field{C}}        
\def\la{\lambda}
\def\eps{\epsilon}
\def\cF{\mathcal{F}}              
\def\cS{\mathcal{S}}
\def\cG{\mathcal{G}}
\def\cC{\mathcal{C}}
\def\a{\aleph}
\def\rd{\bR^d}
\def\rdd{\bR^{2d}}
\def\intrd{\int_{\rd}}
\def\intrdd{\int_{\rdd}}
\def\l{\langle}
\def\<{\left<}
\def\>{\right>}
\def\mv1{M_v^1}
\def\phas{(x,\xi )}
\def\mn{(m,n)}
\def\mn'{(m',n')}
\def\o{\xi}
\def\a{\alpha}
\def\b{\beta}
\def\Ren{\mathbb{R}^d}
\def\f{\varphi}
\def\Sn2{S_{2}(L^{2}(\Ren))}
\def\S1{S_{1}(L^{2}(\Ren))}
\def\sig00{\sigma_{0,0}}
\def\la{\langle}
\def\ra{\rangle}
\begin{document}

\begin{abstract} 
The integration of operator kernels with the Wigner distribution, first conceptualized by E. Wigner in 1932 and later extended by L. Cohen and others, has opened new avenues in time-frequency analysis and operator calculus. Despite substantial advancements, the presence of ``ghost frequencies" in Wigner kernels continues to pose significant challenges, particularly in the analysis of Fourier integral operators (FIOs) and their applications to partial differential equations (PDEs). 

In this work, we build on the foundational concepts of Wigner analysis to introduce a novel framework for controlling ghost frequencies through the combined use of Gaussian and Sobolev regularization techniques. By focusing on FIOs with non-quadratic phase functions, we develop rigorous estimates for the Wigner kernels that are crucial for their applicability to Schrödinger equations with non-trivial symbol classes. Unlike previous approaches, our methodology not only mitigates the interference caused by ghost frequencies but also establishes robust bounds in the context of generalized symplectic mappings.
\end{abstract}

\title[Wigner analysis of operators. Part III]{Wigner analysis of operators. Part III: Controlling ghost frequencies}

\author{Elena Cordero}
\address{Universit\`a di Torino, Dipartimento di Matematica, via Carlo Alberto 10, 10123 Torino, Italy}
\email{elena.cordero@unito.it}

\author{Gianluca Giacchi}
\address{Università della Svizzera Italiana, Faculty of Informatics, Via la Santa 1, 6962 Viganello, Switzerland; Università di Bologna, Dipartimento di Matematica, Piazza di Porta San Donato 5, 40126 Bologna, Italy; University of Lausanne, Switzerland; HES-SO School of Engineering, Rue De L'Industrie 21, Sion, Switzerland; Centre Hospitalier Universitaire Vaudois, Switzerland}
\email{giaccg@usi.ch}

\author{Luigi Rodino}
\address{Universit\`a di Torino, Dipartimento di Matematica, via Carlo Alberto 10, 10123 Torino, Italy}
\email{luigi.rodino@unito.it}

\thanks{}

\subjclass[2020]{47G10;35S30;47G30}
\keywords{Wigner distribution, Fourier integral operators, pseudo-differential operators, Fourier multipliers}
\maketitle

\section{Introduction} 
The original idea of combining operator kernels with the Wigner distribution, pioneered by E. Wigner in 1932 \cite{Wigner1932} and pursued by  J.G. Kirkwood \cite{Kirkwood}, laid the groundwork for a novel approach in operator calculus. Following Wigner's idea, L. Cohen and L. Galleani \cite{GalleaniCohen} applied the Wigner distribution  to classical systems   when the governing equation of the variable is a linear ordinary or partial differential equation. 
Recently, C. Mele and A. Oliaro \cite{MO} studied  regularity of partial differential equations with polynomial coefficients  via Wigner distribution  and proved regularity properties for these classes.

 While Wigner's initial formulation provided a conceptual framework, its refinement and rigorous formulation have been recently achieved by the authors in \cite{CGR2024-1, CGR2024,  CGR2024-2, CGRV2024,CR2022}.

The protagonist of this study is the time-frequency representation named cross-Wigner distribution after Wigner.
Precisely, for $f,g\in L^2(\rd)$, the cross-Wigner distribution $W(f,g)$  is
\begin{equation}\label{intro1}
	W(f,g)(x,\xi)=\int_{\rd}f(x+t/2)\overline{g(x-t/2)}e^{-2\pi i\xi\cdot t}dt, \qquad x,\xi\in\rd.
\end{equation}
The Wigner distribution of $f\in L^2(\rd)$ is $Wf=W(f,f)$.

For a given linear continuous operator $T:\cS(\rd)\to\cS'(\rd)$, Wigner's initial approach consisted in considering another operator $K:\cS(\rdd)\to\cS'(\rdd)$  whose kernel $k_W$ acted as a Schwartz kernel \emph{with respect to the Wigner distribution}. Specifically, 
\begin{equation}\label{intro2}
	W(Tf)(z)=K(Wf)(z)=\int_{\rdd}k_W(z,w)Wf(w)dw, \qquad f\in\cS(\rd).
\end{equation}
However, a precise definition of such an operator necessitates the use of the cross-Wigner distribution \eqref{intro1}. The \emph{Wigner kernel of $T$} is defined as the Schwartz kernel $k_W$ of the operator $K:\cS(\rdd)\to\cS'(\rdd)$ satisfying the intertwining relation:
\begin{equation}\label{intro3}
	W(Tf,Tg)(z)=K(W(f,g))(z)=\int_{\rdd}k_W(z,w)W(f,g)(w)dw,
\end{equation}
for every $f,g\in\cS(\rd)$.

In the present work, we consider integral operators in the form
\begin{equation}\label{intro4}
	T_If(x)=\int_{\rd}e^{2\pi i\Phi(x,\xi)}\sigma(x,\xi)\hat f(\xi)d\xi, \qquad f\in\cS(\rd),
\end{equation}
with phase $\Phi$ and symbol $\sigma$ in suitable classes.  This operators are known as Fourier integral operators (FIO) of Type I \cite{B36,treves} and have been extensively studied in the literature, cf. \cite{CGNRJMPA,coriasco1,coriasco,coriascoruz,Guo-labate,concetti-toft} for a very partial list of contributions. We look for estimates of the type
\begin{equation}\label{intro5}
	|k_W(z,w)|\lesssim\la z-\chi(w)\ra^{2N},
\end{equation}
where $\la \cdot\ra=(1+|\cdot|^2)^{1/2}$, $N\in\bZ$, and $\chi$ is the symplectic map related to the phase $\Phi$ by solving
\begin{equation}\label{intro6}
	\begin{cases}
		y=\Phi_\eta(x,\eta),\\
		\xi=\Phi_x(x,\eta),
	\end{cases}
\end{equation}
 with respect to $\phas$, so that   $z=\chi(w)$, with $z=\phas$ and $w=(y,\eta)$. Estimates \eqref{intro5} are obtained in \cite{CGRV2024} by assuming the symbol $\sigma$ is in Shubin classes of sufficiently negative order. In this work, we address to general symbols suitable for applications to Schr\"odinger equations. In this general scenario, two obstacles arise regarding the validity of \eqref{intro5}. Firstly, the point-wise definition of the Wigner kernel $k_W(z,w)$ for $T$ in \eqref{intro3} may not hold for $z=\chi(w)$. This challenge can be effectively addressed through a rescaling regularization, achieved by convolving with a Bessel-Sobolev potential. Secondly, a more fundamental hurdle arises concerning the nonlinear symplectic map $\chi$. It is widely known that the Wigner distribution introduces ghost frequencies, which contradict \eqref{intro5}, see the contributions of P. Boggiatto, E. Carypis, G. de Donno and A. Oliaro \cite{bcoquadratic,BdDO4,BdDO3,BdDO2,BDO2007}. While a Sobolev regularization can mitigate the impact of ghost frequencies up to a certain order $N$, their complete elimination necessitates convolution with a Gaussian function. Consequently, it becomes imperative to replace $k_W$ in \eqref{intro5} with a smoothed kernel, as clarified in the sequel.

The function spaces for our study are given by the H\"ormander class
$ S^0_{0,0}(\bR^{n})$ \cite{B36} and the exotic class $E(\bR^n\times\bR^m)$ in Definition \ref{defe1}.
Recall that   $S^0_{0,0}(\bR^{n})$ is the class of complex functions $\sigma\in\cC^\infty(\bR^n)$ such that for every $\alpha\in\bN^n$ there exists a constant $C_\alpha>0$ with
\begin{equation}\label{intro7}
	|\partial^\alpha\sigma(z)|\leq C_\alpha, \qquad  \ z\in\bR^n.
\end{equation}
\begin{definition}[The class $E(\bR^n\times\bR^m)$]\label{defe1}
A function $\sigma:\bR^n\times\bR^m\to \bC$ belongs to the class $E(\bR^n\times\bR^m)$,  if $\sigma\in\cC^\infty(\bR^n\times\bR^m)$ and  for every $\alpha\in\bN^n,\beta\in\bN^m$,
\begin{equation}\label{intro8}
	|\partial^\alpha_u\partial^\beta_v\sigma(u,v)|\leq C_{\alpha,\beta}\la v\ra^{3(|\a|+|\beta|)}, \qquad (u,v)\in \bR^n\times\bR^m.
\end{equation} 
\end{definition}
Our study will encompass type I FIOs with \emph{tame} phases (see Definition \ref{defe2} below), as appeared in many papers in the literature, cf. e.g., \cite{concetti-garello-toft, CGNRJMPA,ruzhsugimoto,concetti-toft}.

First, we will exhibit the relation between the Wigner kernel $k_W$ of a type I FIO $T_I$, with a  phase function $\Phi$ which extends the tame case above, and its symbol $\sigma$. 
\begin{proposition}\label{prop-intro1}
Consider the type I FIO in \eqref{intro4} with symbol $\sigma\in S^0_{0,0}(\rd\times\rd)$ and a real phase $\Phi\in\cC^\infty(\rdd)$ such that
	\begin{equation}\label{intro9}
		\partial^\alpha_x\partial^\beta_\xi\Phi(x,\xi)\in S^0_{0,0}(\rd\times\rd), \qquad |\alpha|+|\beta|\geq3.
	\end{equation}
	Then, the Wigner kernel $k_W(z,w)$ of $T_I$ is given by:
	\begin{equation}\label{intro10}
		k_W(x,\xi,y,\eta)=\int_{\rdd}e^{-2\pi i [t\cdot(\xi-\Phi_x(x,\eta))+r\cdot(y-\Phi_\eta(x,\eta))]}\tilde\sigma(x,\eta,t,r)dtdr,
	\end{equation}
where
\begin{equation}\label{sigmatilde}
	\tilde{\sigma}(x,\eta,t,r)=e^{2\pi
		i[\Phi_{3}-\widetilde{\Phi}_{3}](x,\eta,t,r)} \sigma(x+\frac{t}{2},\eta+\frac{r}{2})\overline{\sigma(x-\frac{t}{2},\eta-\frac{r}{2})},
\end{equation}
with
 \begin{equation}
	\label{eq:c11}
	\Phi_{3}(x,\eta,t,r)=\sum_{|\a|=3}\int_0^1(1-\tau)\partial^\a
	\Phi(\phas+\tau(t,r)/2)\,d\tau\frac{(t,r)^\a}{2^4\a!}.
\end{equation}
and 
\begin{equation}
	\label{eq:c12}
	\widetilde{\Phi}_{3}(x,\eta,t,r)=\sum_{|\a|=3}\int_0^1(1-\tau)\partial^\a
	\Phi(\phas-\tau(t,r)/2)\,d\tau\frac{(t,r)^\a}{2^4\a!}.
\end{equation}
The symbol $\tilde\sigma$ is in the class $E(\bR^{2d}\times \bR^{2d})$. In particular, if  $\Phi$ is a quadratic form then $\tilde\sigma\in S^0_{0,0}(\bR^{4d})$.
\end{proposition}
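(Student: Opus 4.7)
The plan is to compute $W(T_I f, T_I g)(x,\xi)$ directly, Taylor-expand the phase, and then identify the Wigner kernel by Fourier inversion. First I substitute the FIO formula \eqref{intro4} into the definition \eqref{intro1}, obtaining a triple integral over $(t,\xi_1,\xi_2)$. The change of variables $\xi_1 = \eta + r/2$, $\xi_2 = \eta - r/2$ (unit Jacobian) exposes the Fourier-side factor $\hat f(\eta + r/2)\overline{\hat g(\eta - r/2)}$ and the oscillatory factor
\begin{equation*}
e^{2\pi i[\Phi(x+t/2,\eta+r/2) - \Phi(x-t/2,\eta-r/2) - \xi\cdot t]}\,\sigma(x+t/2,\eta+r/2)\,\overline{\sigma(x-t/2,\eta-r/2)}.
\end{equation*}

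The central step is a Taylor expansion of $\Phi(x+t/2,\eta+r/2) - \Phi(x-t/2,\eta-r/2)$ centered at $(x,\eta)$. Since the difference is odd under $(t,r)\mapsto -(t,r)$, only odd-order Taylor contributions survive: the first-order part combines into $\Phi_x(x,\eta)\cdot t + \Phi_\eta(x,\eta)\cdot r$, and the residual of order $\ge 3$ is captured by the integral forms \eqref{eq:c11}--\eqref{eq:c12} as $\Phi_3(x,\eta,t,r) - \tilde\Phi_3(x,\eta,t,r)$; assumption \eqref{intro9} on higher derivatives of $\Phi$ ensures the remainder is smooth in all variables. Repackaging the non-linear phase contributions together with the symbol factors into the single amplitude $\tilde\sigma$ of \eqref{sigmatilde} yields
\begin{equation*}
W(T_I f, T_I g)(x,\xi) = \iiint e^{-2\pi i[t\cdot(\xi - \Phi_x(x,\eta)) - r\cdot\Phi_\eta(x,\eta)]}\,\tilde\sigma(x,\eta,t,r)\,\hat f(\eta+r/2)\overline{\hat g(\eta-r/2)}\,dt\,dr\,d\eta.
\end{equation*}

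Then I invoke the Fourier-side representation $W(f,g)(y,\eta) = \int \hat f(\eta + r/2)\overline{\hat g(\eta - r/2)} e^{2\pi i y\cdot r}\,dr$ together with the inversion identity $\int e^{2\pi i y\cdot(r - r')}\,dy = \delta(r - r')$ to rewrite the previous display as $\int k_W(x,\xi,y,\eta)\,W(f,g)(y,\eta)\,dy\,d\eta$ with $k_W$ in precisely the form \eqref{intro10}.

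The last task is to show $\tilde\sigma \in E(\bR^{2d}\times\bR^{2d})$. The product $\sigma(x+t/2,\eta+r/2)\overline{\sigma(x-t/2,\eta-r/2)}$ has all derivatives uniformly bounded because $\sigma \in S^0_{0,0}(\rdd)$. The phase $\Phi_3 - \tilde\Phi_3$ grows at most as $\la(t,r)\ra^3$, and by \eqref{intro9} each of its $(x,\eta)$- or $(t,r)$-derivatives is again $O(\la(t,r)\ra^3)$; hence, via Fa\`a di Bruno, each derivative of the unimodular factor $e^{2\pi i(\Phi_3-\tilde\Phi_3)}$ costs at most a factor $\la(t,r)\ra^3$, producing the bound $C_{\alpha,\beta}\la(t,r)\ra^{3(|\alpha|+|\beta|)}$ required by \eqref{intro8}. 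When $\Phi$ is quadratic, all third-order derivatives of $\Phi$ vanish, so $\Phi_3 \equiv \tilde\Phi_3 \equiv 0$ and $\tilde\sigma$ collapses to the $S^0_{0,0}(\bR^{4d})$-product. The main technical subtlety is producing the odd-order residual in exactly the integral form \eqref{eq:c11}--\eqref{eq:c12}; once that normal form is in hand, the kernel identification and the symbol-class bounds are routine applications of Fourier inversion and the product/chain rules.
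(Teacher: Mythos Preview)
Your proposal is correct and follows essentially the same route as the paper's proof: the paper starts from the kernel formula \eqref{KI} (which it cites from \cite{CGR2024-1}, but which is derived exactly as in your first two paragraphs), then performs the second-order Taylor expansion of $\Phi(x\pm t/2,\eta\pm r/2)$ about $(x,\eta)$ so that the zeroth and second-order terms cancel in the difference, and finally checks $\tilde\sigma\in E(\bR^{2d}\times\bR^{2d})$ via Leibniz/chain-rule bounds on $\partial^\beta e^{2\pi i(\Phi_3-\tilde\Phi_3)}$ identical in spirit to your Fa\`a di Bruno argument. The only cosmetic difference is that you rederive \eqref{KI} rather than quoting it.
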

For quadratic phases  $\Phi$ the result was proved in \cite{CGR2024}. A relevant example is $\Phi(x,\xi)=x\cdot\xi$, i.e., $T_I$ in \eqref{intro4} is a pseudo-differential operator with Kohn-Nirenberg symbol $\sigma(x,\xi)$. Then, $k_W$ is the kernel of a pseudo-differential operator with symbol in dimension $4d$, as already observed in \cite{CR2022}. For non-quadratic phases $\Phi$ the appearance of symbols $\tilde\sigma\in E(\rdd\times\rdd)$ in \eqref{intro4} cannot be avoided. As simple example consider, for $d=1$ in \eqref{intro4}, the phase
\begin{equation}\label{introAMex}
	\Phi(x,\xi)=x\xi-\xi^3,\qquad x,\xi\in\bR,
\end{equation}
and $\sigma=1$. From the proof of Proposition \ref{prop-intro1} below, we will obtain
\[
	\tilde\sigma(x,\eta,t,r)=e^{-i\pi r^3/2}\in E(\bR^2_{x,\eta}\times\bR^2_{t,r}).
\]
To overcome this issue, we shall consider the following two types of smoothing for the Wigner kernel $k_W$. First, define:
\begin{equation}\label{intro11}
	k_{W,G}(z,w):=[k_W\ast G](z,w),
\end{equation}
where $G$ is the Gaussian function on $\bR^{4d}$:
\begin{equation}\label{intro12}
	G(z,w)=e^{-2\pi(z^2+w^2)}, \qquad z,w\in\rdd.
\end{equation}
For $M\in\bZ$ we define the Fourier multiplier
\begin{equation}\label{intro13}
	\la D_x\ra^Mf(x):=\int_{\rd}e^{2\pi ix\cdot \omega}\la \omega\ra^M\hat f(\omega)d\omega=v_M\ast f(x), \qquad x\in\rd, 
\end{equation}
where
\begin{equation}\label{vM}
	v_M(x)=\int_{\rd}e^{2\pi i x\cdot\omega}\la \omega\ra^Md\omega, \qquad x\in\rd,
\end{equation} 
 is the so-called \emph{Bessel-Sobolev potential}.

As second smoothing we consider, for $M\in\bZ$, $M\leq 0$, 
\begin{equation}\label{intro14}
	k_{W,M}(z,w):=\la D_{z,w}\ra^M k_W(z,w),
\end{equation}
where we set
\begin{equation}\label{intro15}
	\la D_{z,w}\ra^M:=\la D_x\ra^M\la D_\xi\ra^M\la D_y\ra^M\la D_\eta\ra^M, \qquad z=(x,\xi), \ w=(y,\eta).
\end{equation}
In order to obtain estimates of type \eqref{intro5}, the Bessel-Sobolev smoothing \eqref{intro14}, \eqref{intro15} is sufficient in the case of a quadratic $\Phi$. In this context, the pseudo-differential case reads as follows.
\begin{theorem}\label{intro-thm2}
	Assume $\Phi(x,\xi)=x\cdot\xi$ in \eqref{intro4}, i.e., $T_I$ is a pseudo-differential operator. If the symbol $\sigma$ is in $S^0_{0,0}(\rd\times\rd)$, then there exists $M\leq0$, depending only on the dimension $d$, such that:
	\begin{equation}\label{intro17}
		|	k_{W,M}(z,w)|\lesssim\la z-w\ra^{-2N}, \qquad \forall N\in\bN.
	\end{equation}
\end{theorem}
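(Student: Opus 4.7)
The plan is to use the explicit formula from Proposition \ref{prop-intro1}: for the quadratic phase $\Phi(x,\xi)=x\cdot\xi$ one has $\Phi_3=\widetilde{\Phi}_3=0$, so
\[
k_W(x,\xi,y,\eta)=\int_{\rdd}\tilde\sigma(x,\eta,t,r)\,e^{-2\pi i[t\cdot(\xi-\eta)+r\cdot(y-x)]}\,dt\,dr,
\]
where $\tilde\sigma(x,\eta,t,r)=\sigma(x+t/2,\eta+r/2)\overline{\sigma(x-t/2,\eta-r/2)}\in S^0_{0,0}(\bR^{4d})$. The four Fourier multipliers composing $\la D_{z,w}\ra^M$ act in disjoint variables and hence commute; I would apply them in two stages, first $\la D_\xi\ra^M\la D_y\ra^M$, then $\la D_x\ra^M\la D_\eta\ra^M$.

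For the first stage, $\xi$ and $y$ appear only in the oscillating phase. The identity $\la D_\xi\ra^M [e^{-2\pi i t\cdot(\xi-\eta)}]=\la t\ra^M e^{-2\pi i t\cdot(\xi-\eta)}$ and its analogue in $y$ give, formally,
\[
\la D_\xi\ra^M\la D_y\ra^M k_W(z,w)=\int_{\rdd}\la t\ra^M\la r\ra^M\,\tilde\sigma(x,\eta,t,r)\,e^{-2\pi i[t\cdot(\xi-\eta)+r\cdot(y-x)]}\,dt\,dr.
\]
Fixing an integer $M<-d$ makes the integrand absolutely integrable in $(t,r)$, so the right-hand side is a genuine continuous function of $(z,w)$ and the identity becomes rigorous. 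Integration by parts then transfers $(\xi-\eta)^\gamma(x-y)^\delta$ onto $\partial_t^\gamma\partial_r^\delta(\la t\ra^M\la r\ra^M\tilde\sigma)$. Leibniz combined with the weight estimate $|\partial_t^{\gamma'}\la t\ra^M|\lesssim\la t\ra^{M-|\gamma'|}\le\la t\ra^M$ and the uniform bound $|\partial_t^{\gamma-\gamma'}\partial_r^{\delta-\delta'}\tilde\sigma|\lesssim 1$ coming from $\tilde\sigma\in S^0_{0,0}$ dominate the resulting integrand by $C_{\gamma,\delta}\la t\ra^M\la r\ra^M$, still integrable. Hence
\[
|(\xi-\eta)^\gamma(x-y)^\delta\la D_\xi\ra^M\la D_y\ra^M k_W(z,w)|\le C_{\gamma,\delta}
\]
uniformly in $(z,w)$, which is equivalent to $|\la D_\xi\ra^M\la D_y\ra^M k_W(z,w)|\lesssim\la z-w\ra^{-2N}$ for every $N\in\bN$.

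For the second stage, $\la D_x\ra^M\la D_\eta\ra^M$ acts by convolution in $(x,\eta)$ with $v_M\otimes v_M$. For $M$ sufficiently negative, $v_M\in L^1(\rd)$ has all polynomial moments finite, and Peetre's inequality
\[
\la(x-y)-u_1,(\xi-\eta)+u_2\ra^{-2N}\lesssim\la x-y,\xi-\eta\ra^{-2N}\la u_1\ra^{2N}\la u_2\ra^{2N}
\]
applied inside the convolution shows that the decay rate $\la z-w\ra^{-2N}$ is preserved. Combining both stages yields $|k_{W,M}(z,w)|\lesssim\la z-w\ra^{-2N}$ with any fixed integer $M<-d$, which depends only on the dimension $d$. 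The delicate step is the rigorous identification of $\la D_\xi\ra^M\la D_y\ra^M k_W$ as a classical integral when $k_W$ is a priori only a tempered distribution; the choice $M<-d$ precisely unlocks absolute convergence, and everything downstream is routine.
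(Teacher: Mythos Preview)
Your proof is correct and follows the same two-stage architecture as the paper: first apply $\la D_\xi\ra^M\la D_y\ra^M$ to obtain a partially smoothed kernel $k_{W,M}^\#$ with decay $\la z-w\ra^{-2N}$, then apply $\la D_x\ra^M\la D_\eta\ra^M$ by convolution with $v_M\otimes v_M$ and check that the decay survives.

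The genuine difference is in Stage~1. The paper represents $\la D_\xi\ra^M\la D_y\ra^M$ as convolution with $v_M\otimes v_M$ in the $(\xi,y)$ variables, which introduces auxiliary integration variables $(\tilde\xi,\tilde y)$; it then performs two rounds of integration by parts (first $(1-\Delta_{t,r})^S$, then $(1-\Delta_{\tilde\xi,\tilde y})^{d+1}$) and must control terms of the form $\partial^\gamma_{\tilde\xi}(\tilde\xi^\alpha v_M(\tilde\xi))$ via Proposition~\ref{propL42}. This forces the explicit choice $M=-3d-3$. Your route is more direct: you exploit that $\xi,y$ enter $k_W$ only through the oscillatory phase, so the Fourier multipliers $\la D_\xi\ra^M,\la D_y\ra^M$ simply insert the damping factors $\la t\ra^M,\la r\ra^M$ into the integrand, after which a single integration by parts suffices. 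This avoids the auxiliary variables and the finer part of Proposition~\ref{propL42}, and yields the conclusion already for any integer $M\le-d-1$ rather than $M=-3d-3$. In Stage~2 the two arguments are essentially equivalent: the paper invokes Proposition~\ref{propL41} (the convolution of two $\la\cdot\ra^{-s}$ weights), while you use Peetre's inequality together with the rapid decay of $v_M$; both rely on $|v_M(x)|\lesssim\la x\ra^{-2P}$ for $M\le-d-1$, which is the special case of Proposition~\ref{propL42}.
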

For non-quadratic $\Phi$, the estimates \eqref{intro17} fail for large $N$, because $\tilde\sigma\in E(\rdd_{x,\eta}\times\rdd_{t,r})$, see Section \ref{sec3} below for the aforementioned example with $\Phi$ as in \eqref{introAMex} and $\sigma=1$. The Gaussian smoothing \eqref{intro11} and \eqref{intro12} is then needed. 
%
Also, we need some assumptions on $\Phi(x,\eta)$ to derive from \eqref{intro6} the expression of the symplectic  map $\chi$. One possibility is then consider a phase $\Phi$  tame, cf. Definition \ref{defe2} below. 
Otherwise, we may assume $\Phi$ of the form 
\begin{equation}\label{introbis} 
	\Phi(x,\xi)=x\xi-\f(\xi),\quad \partial^\beta_\xi\f\in S^0_{0,0}(\rd),\,\mbox{for}\,\,|\beta|\geq3.
\end{equation}
 In both cases condition \eqref{intro9} is satisfied and $z=\chi(w)$, $z=(x,\xi)$, \, $w=(y,\eta)$, from \eqref{intro6} is well defined, see Proposition 5.2 below for details.  The canonical transformation  $\chi$ associated with $\Phi$ in \eqref{introAMex} is
\begin{equation}\label{intro21}
	z=\chi(w)=(y+3\eta^2,\eta), \qquad z=(x,\xi), \ w=(y,\eta).
\end{equation}

\begin{theorem}\label{intro-thm3}
If the phase $\phi$ is tame or is of the form \eqref{introbis}, the symbol $\sigma$ is in  $S^0_{0,0}(\rd\times\rd)$, then the Gaussian smoothed Wigner kernel $k_{W,G}$ in \eqref{intro11} satisfies the estimate
\begin{equation}\label{intro22}
	|k_{W,G}(z,w)|\lesssim\la z-\chi(w)\ra^{-2N}, \qquad \forall N\in\bN.
\end{equation}
\end{theorem}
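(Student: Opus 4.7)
The plan is to start from the explicit representation of the Wigner kernel given in Proposition~\ref{prop-intro1}, namely
\begin{equation*}
k_W(x,\xi,y,\eta)=\int_{\rdd}e^{-2\pi i [t\cdot(\xi-\Phi_x(x,\eta))+r\cdot(y-\Phi_\eta(x,\eta))]}\tilde\sigma(x,\eta,t,r)\,dt\,dr,
\end{equation*}
and to carry out the convolution $k_W\ast G$ by integrating first in the dummy variables $\xi''$ and $y''$, which are Fourier-dual to $t$ and $r$. The Gaussians $e^{-2\pi|\xi''|^{2}}$ and $e^{-2\pi|y''|^{2}}$ combined with the oscillatory factors $e^{2\pi it\cdot\xi''}$ and $e^{2\pi ir\cdot y''}$ produce Gaussian damping factors $e^{-\pi|t|^2/2}$ and $e^{-\pi|r|^2/2}$ (up to explicit constants). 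This is the decisive step: the resulting Gaussian decay in $(t,r)$ beats any polynomial growth of $\tilde\sigma$ and of its derivatives, which by Definition~\ref{defe1} are only controlled by $\la(t,r)\ra^{3(|\alpha|+|\beta|)}$. Without it, the mere Sobolev regularization of Theorem~\ref{intro-thm2} would not suffice for non-quadratic phases.

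Once the $(t,r)$ integrand carries a Gaussian damping, I would integrate by parts in $(t,r)$ using the operator $(1-\Delta_{t,r}/(4\pi^2))^N$, which multiplies the oscillatory factor by $\la(\xi-\Phi_x(x-x'',\eta-\eta''),\,y-\Phi_\eta(x-x'',\eta-\eta''))\ra^{2N}$. Each application produces polynomial growth of order at most $3\cdot 2N$ in $(t,r)$ from derivatives of $\tilde\sigma$ and polynomial-times-Gaussian contributions from derivatives of the damping factor, all still absorbed by $e^{-\pi(|t|^2+|r|^2)/2}$; hence the $(t,r)$-integration remains finite. This yields, for every $N\in\bN$, the pointwise decay
\begin{equation*}
\la(\xi-\Phi_x(x-x'',\eta-\eta''),\,y-\Phi_\eta(x-x'',\eta-\eta''))\ra^{-2N}
\end{equation*}
at the shifted argument.

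The main obstacle, and the step where the structural hypotheses on $\Phi$ enter, is to convert this into decay in $z-\chi(w)$. Under either tameness or the form \eqref{introbis}, the relation \eqref{intro6} defines $\chi$ globally and its derivatives of order $\geq 2$ are in $S^0_{0,0}$; a Taylor expansion then yields the two-sided equivalence $\la(\xi-\Phi_x(x,\eta),y-\Phi_\eta(x,\eta))\ra\simeq\la z-\chi(w)\ra$. Passing from the shifted point $(x-x'',\eta-\eta'')$ back to $(x,\eta)$ via Peetre's inequality introduces at most a polynomial factor in $(x'',\eta'')$, whose degree depends on the growth bounds on $\Phi$ and $\chi$. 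This factor is absorbed by the outer Gaussian $e^{-2\pi(|x''|^{2}+|\eta''|^{2})}$, and the remaining integration in $(x'',\eta'')$ is finite, giving $|k_{W,G}(z,w)|\lesssim\la z-\chi(w)\ra^{-2N}$ for every $N\in\bN$ as required.
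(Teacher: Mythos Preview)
Your first two steps---integrating out $\xi''$ and $y''$ to produce a Gaussian damping factor $e^{-\pi(|t|^{2}+|r|^{2})/2}$, and then integrating by parts in $(t,r)$---are correct and, if anything, a little slicker than the paper's route (which keeps the Gaussians in the dummy variables $(\tilde\xi,\tilde y)$ and uses a second integration by parts in those variables to manufacture the needed decay in $(t,r)$). Both lead to the same intermediate bound $\lambda_\Phi(x-x'',\xi,y,\eta-\eta'')^{-2N}$, in the notation of \eqref{intro16}.

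The gap is in your final step. Your claim that passing from $(x-x'',\eta-\eta'')$ back to $(x,\eta)$ via Peetre's inequality ``introduces at most a polynomial factor in $(x'',\eta'')$'' is true for tame phases (second derivatives of $\Phi$ are bounded by \eqref{tame}), but it fails for phases of the form \eqref{introbis}. Take the prototype $\Phi(x,\xi)=x\xi-\xi^3$ of \eqref{introAMex}, so that $\Phi_\eta(x,\eta)=x-3\eta^2$. Then
\[
\Phi_\eta(x-x'',\eta-\eta'')-\Phi_\eta(x,\eta)=-x''+6\eta\,\eta''-3(\eta'')^{2},
\]
and Peetre produces a factor $\la(\eta'',\,x''-6\eta\,\eta''+3(\eta'')^2)\ra^{2N}$, which grows like $\la\eta\ra^{2N}$ in the \emph{external} variable $\eta$. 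This is not absorbed by the Gaussian $e^{-2\pi(|x''|^{2}+|\eta''|^{2})}$, and the resulting bound carries an unwanted power of $\la\eta\ra$.

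The paper circumvents this by solving the system \eqref{intro6} not only for $(x,\xi)$ in terms of $(y,\eta)$ (giving $\chi$) but also for $(x,\eta)$ in terms of $(y,\xi)$, yielding a map $\psi=(\psi_1,\psi_2)$ with the equivalence
\[
\la z-\chi(w)\ra\asymp\lambda_\Phi(z,w)\asymp\la(x-\psi_1(y,\xi),\,\eta-\psi_2(y,\xi))\ra
\]
(Propositions~\ref{Prop5.2} and \ref{Prop5.3}). The crucial point is that $\psi_1,\psi_2$ depend only on the variables $(y,\xi)$, which are \emph{not} being convolved in the remaining step; the $(x,\eta)$-convolution therefore becomes a genuine translation-invariant convolution of weights $\la\,\cdot\,\ra^{-2N}$, to which Proposition~\ref{propL41} applies directly. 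You need this change of coordinates (or an equivalent device decoupling the convolution variables from the nonlinearity of $\Phi$) to close the argument in the non-tame case.
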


As a byproduct, our results extend the existing literature on time-frequency representations by generalizing classical estimates for Gabor matrices and uncovering new regularity properties for Wigner kernels in higher dimensions.  
\begin{definition}[Gabor matrix]
Define the modulation $M_\xi$ and translation $T_x$ operators by
$$T_xf=f(t-x),\quad M_\xi f(t)=e^{2\pi i\xi\cdot t}f(t-x), \quad t, x,\xi\in\rd.$$ Their composition is the so-called  time-frequency shift
\begin{equation}\label{defTFshift}\pi(z)f(t)=M_\xi T_xf(t)=e^{2\pi i\xi\cdot t}f(t-x),\quad z=(x,\xi)\in\rdd.
\end{equation}
Fix two \emph{window functions} $g,\gamma\in\cS(\rd)\setminus\{0\}$. The \emph{Gabor matrix} of a continuous linear operator $T:\cS(\rd)\to\cS'(\rd)$ is defined by
\begin{equation}\label{intro23}
	\cG_m (z,w):=\la T\pi(z)g,\pi(w)\gamma\ra, \qquad z,w\in\rdd.
\end{equation}
\end{definition}
 Fixing
\begin{equation}\label{intro24}
	g(t)=\gamma(t)=2^{-d/4}e^{-\pi t^2},
\end{equation}
we relate the Gabor matrix $\cG_m $ to the Wigner kernel $k_W$:
\begin{equation}\label{intro25}
	|\cG_m (z,w)|^2=[k_W\ast G](z,w)=k_{W,G}(z,w),
\end{equation}
with the notation \eqref{intro11} and \eqref{intro12}, see \cite{CGR2024-2} and Section \ref{sec3} below. Combining the equality in \eqref{intro25} with the estimate for the Gaussian smoothed Wigner kernel $k_{W,G}$  in  \eqref{intro22}, we obtain the following consquence. 
\begin{corollary}\label{intro-cor4}
	Assume $\Phi$ is a tame function, c.f. Definition \ref{defe2}, and  the symbol  $\sigma\in S^0_{0,0}(\rd\times\rd)$. Then, the Gabor matrix $\cG_m$ of $T$ in \eqref{intro25} satisfies
	\[
		|\cG_m(z,w)|\lesssim\la z-\chi(w)\ra^{-N}, \qquad \forall N\in\bN,
	\]
	where $\chi$ is the canonical transformation related to $\Phi$.
\end{corollary}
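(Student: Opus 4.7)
The plan is straightforward: chain together the pointwise identity \eqref{intro25} relating the Gabor matrix to the Gaussian-smoothed Wigner kernel with the decay estimate of Theorem \ref{intro-thm3}. The corollary is essentially a byproduct, once the heavy lifting in that theorem has been carried out.

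First I would verify that the hypotheses of Theorem \ref{intro-thm3} are in force. Since $\Phi$ is tame (Definition \ref{defe2}) and $\sigma\in S^0_{0,0}(\rd\times\rd)$, Proposition \ref{prop-intro1} applies (the tame condition implies \eqref{intro9}) and the canonical transformation $\chi$ associated to $\Phi$ via \eqref{intro6} is well defined. Hence the Gaussian smoothed Wigner kernel $k_{W,G}$ of $T_I$ satisfies, for every $N\in\bN$,
$$|k_{W,G}(z,w)|\lesssim\la z-\chi(w)\ra^{-2N}.$$

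Next I would invoke identity \eqref{intro25}: with the normalized Gaussian windows $g(t)=\gamma(t)=2^{-d/4}e^{-\pi t^2}$ of \eqref{intro24}, one has $|\cG_m(z,w)|^2=k_{W,G}(z,w)$. Conceptually, this identity is the observation (proved in \cite{CGR2024-2} and recalled in Section~\ref{sec3}) that the cross-Wigner $W(\pi(z)g,\pi(w)\gamma)$ of two Gaussian time-frequency shifts is itself a Gaussian, so that pairing it against the Wigner kernel collapses to the convolution $k_W\ast G$ evaluated at $(z,w)$. In particular $k_{W,G}(z,w)\ge 0$, and the positive square root is unambiguous.

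Combining the two steps,
$$|\cG_m(z,w)|=\sqrt{k_{W,G}(z,w)}\lesssim\la z-\chi(w)\ra^{-N}$$
for every $N\in\bN$, which is the claim. There is no real obstacle here: the delicate points — controlling the ghost frequencies produced by the non-quadratic phase by means of Gaussian regularization, and the identification of $\chi$ as the canonical transformation associated with $\Phi$ — are precisely what Theorem \ref{intro-thm3} settles; the corollary then follows simply by taking square roots and doubling the decay exponent $N$.
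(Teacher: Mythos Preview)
Your proposal is correct and follows essentially the same route as the paper: the paper's own proof of Corollary \ref{intro-cor4} also reduces the statement to identity \eqref{intro25} (whose derivation via the covariance of the Wigner distribution is sketched there) combined with the decay estimate of Theorem \ref{intro-thm3}, and then takes the square root.
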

The above result was first proved in \cite{CGNRJMPA}. The same estimates are valid for the Gabor matrix of the prototypical example of the operator in \eqref{intro4}, with $\sigma=1$, $\Phi$  in \eqref{introAMex}, and $\chi$ in \eqref{intro21}.

To validate the importance of our approach, in Appendix \ref{AppA} we provide examples of how Gaussian convolution reduces ghost frequencies, comparing this smoothing with the Bessel-Sobolev one.

Overall, our findings not only advance the theoretical framework of Wigner analysis but they might have far-reaching implications for practical applications in quantum mechanics and signal processing.

The organization of this paper is as follows. We start with a preliminary
section (Section $2$) devoted to the definition and basic properties of the Wigner distribution and related time-frequency representation, the rigorous definition of the Wigner kernel and main properties, the Fourier integral operators object of our study.  Section $3$ is devoted to the study of the Wigner kernel, in particular we shall prove Theorems \ref{intro-thm2} and \ref{intro-thm3}. Section $4$ gives applications of the above theory to Schr\"{o}dinger equations. Appendix \ref{AppA} exhibits examples of how our smoothing methods reduce ghost frequencies.
\section{Preliminaries}
We denote by $x\xi=x\cdot\xi$, $x,\xi\in\rd$, the standard inner product in $\rd$. If $f,g\in L^2(\rd)$, $\la f,g\ra=\int_{\rd}f(x)\overline{g(x)}dx$ denotes their inner product in $L^2(\rd)$, whereas the same notation is used for the duality pairing $\cS'\times\cS$ (conjugate-linear in the second component). We denote by $\cC(\rd)$ the space of complex-valued continuous functions on $\rd$. To facilitate the reading, we shall adopt the weak integral notation. 

\subsection{Wigner distribution}
	For $f,g\in L^2(\rd)$,	the (cross-)Wigner distribution $W(f,g)$
is defined in \eqref{intro1}. If $f=g$, $Wf=W(f,f)$ denotes the Wigner distribution of $f$. The (cross-)Wigner distribution extends uniquely to $\cS'(\rd)$ as follows:
	\begin{equation}\label{defWSp}
		W(f,g)=\cF_2\mathfrak{T}_w(f\otimes\bar g), \qquad f,g\in\cS'(\rd),
	\end{equation}
	where
	\[
		\cF_2F(x,\xi)=\int_{\rd}F(x,t)e^{-2\pi it\xi}dt, \qquad F\in\cS(\rdd), \quad x,\xi\in\rd,
	\]
	denotes the Fourier transform with respect to the frequency variables, and
	\[
		\mathfrak{T}_wF(x,t)=F(x+t/2,x-t/2), \qquad F\in\cS(\rdd).
	\]
	Observe that $\mathfrak{T}_w^{-1}F(x,y)=F(\frac{x+y}{2},x-y)$. Specifically, \eqref{defWSp} means that $W(f,g)$ is the tempered distribution in $\cS'(\rdd)$ so that
	\[
		\la W(f,g),\Phi\ra=\la f\otimes\bar g,\mathfrak{T}_w^{-1}\cF_2^{-1}\Phi\ra, \qquad \Phi\in\cS(\rdd).
	\]
	Recall that $W:\cS(\rd)\times\cS(\rd)\to\cS(\rdd)$ and $\mbox{span}\{W(f,g):f,g\in\cS(\rd)\}$ is dense in $\cS(\rdd)$. We will use the polarization formula:
	\begin{equation}\label{polarization}
		W(f+g)=Wf+Wg+2\Re(W(f,g)), \qquad f,g\in\cS'(\rd).
	\end{equation}
	
\subsection{The Husimi distribution} This distribution will be used in Appendix \ref{AppA} to provide examples which validate this study.
	Let us consider the Gaussian window $\f(t)=e^{-\pi|t|^2}$. Let $f\in \cS'(\rd)$. The \emph{Husimi distribution} of $f$ is:
	\[
		Hf(x,\xi)=W\f\ast Wf\phas,
	\] 
	where $W\f(x,\xi)=2^{d/2}e^{-2\pi(|x|^2+|\xi|^2)}$. To compute $Hf$ it may be useful to use the following characterization of the Husimi distribution by means of the short-time Fourier transform (STFT):
	\[
		V_\f f(x,\xi)=\la f,\pi(x,\xi)g\ra, \qquad f\in\cS'(\rd),\quad x,\xi\in\rd,
	\]
	where the time-frequency shift $\pi(x,\xi)$ is defined as in \eqref{defTFshift}. In fact,
	\begin{equation}\label{husimi}
		Hf(x,\xi)=|V_\f f(x,\xi)|^2.
	\end{equation}
	We will also use the fundamental identity of time-frequency analysis:
	\begin{equation}\label{fundidTF}
		V_gf(x,\xi)=e^{-2\pi i\xi x}V_{\hat g}\hat f(\xi,-x), \qquad f\in\cS'(\rd), \quad g\in\cS(\rd), \quad x,\xi\in\rd.
	\end{equation}

\subsection{Wigner kernels}
	Let $T:\cS(\rd)\to\cS'(\rd)$ be a linear and continuous operator. It was proved in Theorem 3.4 of \cite{CGR2024-1} that there exists a unique distribution $k_W\in\cS'(\bR^{4d})$ such that
	\begin{equation}\label{def-Wk}
		W(Tf,Tg)(z)=\int_{\rdd}k_W(z,w)W(f,g)(w)dw, \qquad f,g\in\cS(\rd),
	\end{equation} 
	where the integral is understood in the sense of distributions.
	The distribution $k_W$ is called the \emph{Wigner kernel} of $T$. Namely, \eqref{def-Wk} means that $k_W$ is the tempered distribution of $\cS'(\bR^{4d})$ characterized by:
	\[
		\la k_W,\Phi\otimes \overline{W(f,g)}\ra=\la W(Tf,Tg),\Phi\ra, \qquad \Phi\in\cS(\rdd), \quad f,g\in\cS(\rd).
	\]
	
	If $k_T\in\cS'(\rdd)$ is the Schwartz kernel of $T$, i.e., the unique tempered distribution satisfying:
	\begin{equation}
		Tf(x)=\int_{\rd}k_T(x,y)f(y)dy, \qquad f\in\cS(\rd),
	\end{equation}
	then (\cite[Theorem 3.4]{CGR2024-1})
	\[
		k_W(x,\xi,y,\eta)=Wk_T(x,y,\xi,-\eta).
	\]

\subsection{Fourier integral operators}
We first recall the definition of \emph{tame} phases.
\begin{definition}[Tame phases]\label{defe2}
	A phase function $\Phi:\rd\times\rd\to \bR$ is called \emph{tame} if $\Phi\in\cC^\infty(\rdd)$, there exists $\delta>0$ such that
	\begin{equation}\label{tame0}
		|\det(\partial^2_{x,\xi}\Phi(x,\xi))|\geq\delta, 
	\end{equation}
	and
	\begin{equation}\label{tame}
		\partial^\alpha_x\partial^\beta_\xi\Phi(x,\xi)\in S^0_{0,0}(\rd\times\rd), \qquad |\alpha|+|\beta|\geq2.
	\end{equation}
\end{definition}

Consider a symbol $\sigma\in\cS'(\rdd)$. 
The Fourier integral operator (FIO) of type I with symbol $\sigma$ and tame phase $\Phi$ is the operator:
\begin{equation}\label{type1}
	T_If(x)=\int_{\rd}e^{2\pi i\Phi(x,\xi)}\sigma(x,\xi)\hat f(\xi)d\xi, \qquad f\in\cS(\rd).
\end{equation}
A pseudo-differential operator in the Kohn-Nirenberg form is a FIO with phase $\Phi(x,\xi)=x\xi$, i.e. in the form:
\begin{equation}\label{K-N}
	\sigma(x,D)f(x)=\int_{\rd}e^{2\pi i\xi x}\sigma(x,\xi)\hat f(\xi)d\xi, \qquad f\in\cS(\rd).
\end{equation}

\section{Estimates of the Wigner kernel}\label{sec3}
In what follow we exhibit the Wigner kernel  $k_W$ of a type I FIO as in \eqref{type1},  obtained in \cite[Theorem 5.8]{CGR2024-1}.
\begin{theorem}\label{teor3.6}
Let $T_I$ be a FIO of type I as in \eqref{type1} with symbol $\sigma\in S^0_{0,0}(\rdd)$. For $f\in\cS(\rd)$,
	\begin{equation}\label{K}
		K(W(f,g))(x,\xi)=W(T_If,T_Ig)(x,\xi)=\int_{\rdd}k_W(x,\xi,y,\eta)W(f,g)(y,\eta)dyd\eta,
	\end{equation}
	where the Wigner kernel $k_W$ is given by
	\begin{equation}\label{KI}
		k_W(x,\xi,y,\eta)=\int_{\rdd}e^{2\pi i[\Phi_I(x,\eta,t,r)-(\xi t+ry)]} \sigma_I(x,\eta,t,r)dtdr,
	\end{equation}
	with, for $x,\eta,t,r\in\rd$,
	\begin{equation}\label{fii}
		\Phi_I(x,\eta,t,r)=\Phi(x+\frac{t}{2},\eta+\frac{r}{2})-\Phi(x-\frac{t}{2},\eta-\frac{r}{2})
	\end{equation}
	and
	\begin{equation}\label{sigmai}
		\sigma_I(x,\eta,t,r):=\sigma(x+\frac{t}{2},\eta+\frac{r}{2})\overline{\sigma(x-\frac{t}{2},\eta-\frac{r}{2})}.
	\end{equation}
\end{theorem}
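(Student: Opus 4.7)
The strategy I would take is to compute $W(T_I f, T_I g)(x,\xi)$ directly from the defining formula \eqref{intro1}, then reorganize the resulting multiple oscillatory integral so that the cross-Wigner distribution $W(f,g)$ appears explicitly under the integral sign. The inner bracketed factor will then be identified as the Wigner kernel $k_W$ in the claimed form \eqref{KI}. As a cross-check, one may verify a posteriori that the relation $k_W(x,\xi,y,\eta)=Wk_T(x,y,\xi,-\eta)$ recalled in Section~2 is satisfied, but I would not take that as the starting point since a direct computation is more transparent.

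The first concrete step is to expand the two factors inside the Wigner integrand using \eqref{type1}:
\begin{align*}
T_I f(x+t/2) &= \int_{\rd} e^{2\pi i \Phi(x+t/2,u)} \sigma(x+t/2,u)\, \hat f(u)\, du,\\
\overline{T_I g(x-t/2)} &= \int_{\rd} e^{-2\pi i \Phi(x-t/2,v)} \overline{\sigma(x-t/2,v)}\, \overline{\hat g(v)}\, dv,
\end{align*}
so that, formally,
\begin{align*}
W(T_I f, T_I g)(x,\xi) = \iiint e^{2\pi i[\Phi(x+t/2,u)-\Phi(x-t/2,v)-\xi\cdot t]} \sigma(x+t/2,u) \overline{\sigma(x-t/2,v)}\, \hat f(u) \overline{\hat g(v)}\, dt\, du\, dv.
\end{align*}

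Next, I would perform the symmetric change of variables $u=\eta+r/2$, $v=\eta-r/2$ (with unit Jacobian) and invoke the Fourier-dual representation of the cross-Wigner distribution,
\begin{align*}
\hat f(\eta+r/2)\,\overline{\hat g(\eta-r/2)} = \int_{\rd} W(f,g)(y,\eta)\, e^{-2\pi i y\cdot r}\, dy,
\end{align*}
which is a consequence of the symmetry $W(f,g)(y,\eta)=W(\hat f,\hat g)(\eta,-y)$ combined with Fourier inversion in $y$. Substituting and regrouping the exponentials yields
\begin{align*}
W(T_I f, T_I g)(x,\xi) = \iint W(f,g)(y,\eta) \left[\iint e^{2\pi i[\Phi_I(x,\eta,t,r)-\xi\cdot t-y\cdot r]} \sigma_I(x,\eta,t,r)\, dt\, dr\right] dy\, d\eta,
\end{align*}
with $\Phi_I$ and $\sigma_I$ exactly as in \eqref{fii}--\eqref{sigmai}. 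The bracketed expression is precisely \eqref{KI}, completing the identification of $k_W$.

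The principal obstacle is the rigorous justification of these rearrangements: for $\sigma\in S^0_{0,0}(\rdd)$ none of the oscillatory integrals above converge absolutely, so Fubini's theorem cannot be applied directly. I would address this either by carrying out the entire argument in the tempered-distribution framework---using that $W(f,g)\in\cS(\rdd)$ whenever $f,g\in\cS(\rd)$, interpreting every step as a duality pairing of a Schwartz function against a tempered distribution---or by inserting Gaussian regularizers of the form $e^{-\epsilon(|u|^2+|v|^2+|t|^2+|r|^2)}$ to render every integral absolutely convergent, performing the manipulations freely, and then passing to the limit $\epsilon\to 0^+$ in $\cS'(\rdd)$ via dominated convergence against test functions. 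Uniqueness of the resulting $k_W$ follows from the general theory of Wigner kernels recalled in the preliminaries.
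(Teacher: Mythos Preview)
The paper does not actually prove this theorem: it is stated as a quotation of \cite[Theorem 5.8]{CGR2024-1}, so there is no in-paper proof to compare against. Your direct computation is correct and is in fact the natural route; the change of variables $u=\eta+r/2$, $v=\eta-r/2$ together with the inversion identity $\hat f(\eta+r/2)\,\overline{\hat g(\eta-r/2)}=\int_{\rd} W(f,g)(y,\eta)\,e^{-2\pi i y\cdot r}\,dy$ produces exactly the claimed kernel, and your remarks on justifying the formal manipulations (distributional pairing or Gaussian regularization) are appropriate for $\sigma\in S^0_{0,0}$.
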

This relation is the key tool for what follows.
\begin{proof}[Proof of Proposition \ref{prop-intro1}]\label{sec:prop-intro1proof}
We use the proof's pattern of \cite[Theorem 5.9]{CGR2024-1}. 
The phase  $\Phi$ is smooth and we can expand $\Phi(x+\frac{t}{2},\eta+\frac{r}{2})$ and $\Phi(x-\frac{t}{2},\eta-\frac{r}{2})$
into a Taylor series of second order around
$(x,\eta)$. Namely,
\begin{equation}\label{E1}
	\Phi\left(x+\frac{t}{2},\eta+\frac{r}{2}\right)=\Phi\phas+\frac {t}2\Phi_x \phas+\frac {r}2 \Phi_\eta \phas +\Phi_{2}(x,\eta,t,r)+\Phi_{3}(x,\eta,t,r),
\end{equation}
where 
$$\Phi_{2}(x,\eta,t,r)=\frac14 \sum_{|\a|=2}\partial^\alpha\Phi(x,\eta)\frac{(t,r)^\alpha}{\alpha!}$$
and
the remainder term $\Phi_{3}$ is exhibited in \eqref{eq:c11}.
Similarly, 
\begin{equation}\label{E2}
	\Phi\left(x-\frac{t}{2},\eta-\frac{r}{2}\right)=\Phi\phas-\frac {t}2\Phi_x \phas-\frac {r}2 \Phi_\eta \phas +{\Phi}_{2}(x,\eta,t,r)+\widetilde{\Phi}_{3}(x,\eta,t,r),
\end{equation}
with
$\widetilde{\Phi}_{3}$ defined in \eqref{eq:c12}.
Inserting in \eqref{KI} we obtain the espression of the Wigner kernel $k_W$ in \eqref{intro10}, where the symbol 
$\tilde{\sigma}$ is defined in \eqref{sigmatilde}.
Next, we work on the symbol $\tilde \sigma$. Observe that $\tilde\sigma\in\cC^\infty(\bR^{4d})$.
For $u=(x,\eta)$, $v=(t,r)$, by means of Leibniz's formula we can write
$$\partial^{\alpha'}_u \tilde\sigma(u,v)=
\sum_{\b+\gamma+\delta\leq \a'}
C_{\b,\gamma,\delta}
\partial_u^\b\left(e^{2\pi
	i[\Phi_{3}-\widetilde{\Phi}_{3}](u,v)}\right) \partial^\gamma_u \sigma(u+v/2)
\overline{\partial_u^\delta{\sigma}(u-v/2)}.$$
Now,
\begin{equation}\label{Expe0}
	\partial_u^\b\left(e^{2\pi
	i[\Phi_{3}-\widetilde{\Phi}_{3}](u,v)}\right) =e^{2\pi
	i[\Phi_{3}-\widetilde{\Phi}_{3}](u,v)}\sum_{|\a|=3,\,\tilde\beta\leq\beta} f_{\a,\tilde\beta}(\Phi )(u,v)(v^{\a})^{|\tilde\beta|}
\end{equation}
where $f_{\a,\tilde\beta}(\Phi )$ is a linear combination of products of functions
$$h_{\a,\theta}(\Phi )(u,v)= \int_0^1(1-\tau)\left[\partial^{\a+{\theta}}
\Phi(u+\frac{\tau}{2} v)-\partial^{\a+{\theta}}
\Phi(u-\frac{\tau}{2} v)\right]\,d\tau$$
with $\theta\leq\a'$ and
$$|h_{\a,\theta}(\Phi )(u,v)|\leq C_{\alpha,{\a'}},\,\quad\forall (u,v)\in\rdd,$$
by assumption \eqref{intro9}.
Hence we can estimate
$$\left|\partial_u^{\a'}\left(e^{2\pi
	i[\Phi_{3}-\widetilde{\Phi}_{3}](u,v)}\right) \right|\leq C_{\alpha'}\la v\ra^{3|\a'|},\quad\forall (u,v)\in\rdd.$$
Using Leibniz's formula for the partial derivatives with respect to $v$ we infer 
\begin{align*}\partial^{\beta'}_v \tilde\sigma(u,v)&=
\sum_{\b+\gamma+\delta\leq \beta'}C_{\b,\gamma,\delta}\,
(-1)^{|\delta|}2^{-(|\gamma|+|\delta|)}
\partial_v^\b\left(e^{2\pi
	i[\Phi_{3}-\widetilde{\Phi}_{3}](u,v)}\right) \partial^\gamma_v \sigma(u+v/2)
\overline{\partial_v^\delta{\sigma}(u-v/2)}\\
&=\sum_{\b+\gamma+\delta\leq \beta'}\widetilde C_{\b,\gamma,\delta}\,
\partial_v^\b\left(e^{2\pi
	i[\Phi_{3}-\widetilde{\Phi}_{3}](u,v)}\right) \partial^\gamma_v \sigma(u+v/2)
\overline{\partial_v^\delta{\sigma}(u-v/2)}.
\end{align*}
Computing similarly the derivatives of the complex exponential 
$$\partial_v^\b\left(e^{2\pi
	i[\Phi_{3}-\widetilde{\Phi}_{3}](u,v)}\right) $$
 we obtain the global estimate
$$ |\partial^{\a}_u\partial^{\b}_v\tilde\sigma(u,v)|\leq C_{\a,\beta}\la v\ra^{3(|\a|+|\beta|)},\quad \a,\beta\in\bN^{2d},$$
that is $\tilde\sigma\in E(\rdd\times\rdd)$.

In particular, if $\Phi$ is quadratic then $\tilde\Phi_3\equiv 0$ and the assumption $\sigma\in S^0_{0,0}(\rdd)$ yields $\tilde\sigma \in S^0_{0,0}(\bR^{4d})$, cf. \cite{CGR2024-1}.
\end{proof}

Next, we focus on the proof of Theorem \ref{intro-thm2}. We need two auxiliary propositions.

	\begin{proposition}\label{propL41}
		Let $m_j\in\cC(\rd)$, $j=1,2$, with
		\[
			|m_j(x)|\lesssim\la x\ra^{-s}, \qquad j=1,2, \quad x\in\rd, \quad s>d.
		\]
		Then, the convolution $m=m_1\ast m_2$ is in $\cC(\rd)$, with
		\[
			|m(x)|\lesssim\la x\ra^{-s}, \qquad x\in\rd.
		\]
	\end{proposition}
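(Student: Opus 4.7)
The plan is to split the convolution integral according to which of the two factors dictates the decay, and then to apply the integrability hypothesis $s>d$ in each piece separately.

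Write $m(x)=\int_{\bR^d}m_1(y)m_2(x-y)\,dy$ and split $\bR^d=A_x\cup B_x$ with $A_x=\{y:|y|\leq|x|/2\}$ and $B_x=\{y:|y|>|x|/2\}$. On $A_x$ the triangle inequality gives $|x-y|\geq|x|/2$, hence $\langle x-y\rangle\gtrsim\langle x\rangle$ with an absolute constant (valid for every $x\in\bR^d$, after noting that for $|x|\leq 1$ the stated conclusion is anyway immediate). Using $|m_2(x-y)|\lesssim\langle x-y\rangle^{-s}\lesssim\langle x\rangle^{-s}$ on $A_x$, one obtains
\[
\Bigl|\int_{A_x}m_1(y)m_2(x-y)\,dy\Bigr|\lesssim \langle x\rangle^{-s}\int_{\bR^d}\langle y\rangle^{-s}\,dy,
\]
and the last integral is finite because $s>d$. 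On $B_x$ one has $|y|>|x|/2$ and thus $\langle y\rangle\gtrsim\langle x\rangle$, so by the same device
\[
\Bigl|\int_{B_x}m_1(y)m_2(x-y)\,dy\Bigr|\lesssim \langle x\rangle^{-s}\int_{\bR^d}\langle x-y\rangle^{-s}\,dy=\langle x\rangle^{-s}\int_{\bR^d}\langle z\rangle^{-s}\,dz,
\]
which is again finite. Summing the two contributions gives $|m(x)|\lesssim\langle x\rangle^{-s}$.

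For the continuity claim, the pointwise integrand $y\mapsto m_1(y)m_2(x-y)$ is continuous in $x$ for each $y$, and is dominated, uniformly in $x$ lying in any bounded set, by a multiple of the integrable function $\langle y\rangle^{-s}$ (use the trivial bound $|m_2|\lesssim 1$, which follows from $|m_2(\cdot)|\lesssim\langle\cdot\rangle^{-s}\leq 1$). Dominated convergence then yields $m\in\cC(\rd)$.

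The argument is entirely routine; the only point meriting a line of care is the constant in $\langle x-y\rangle\gtrsim\langle x\rangle$ on $A_x$, which is the reason for fixing the split at $|x|/2$ rather than at $|x|$. No deeper obstacle arises, and the result extends verbatim to weights of the form $\langle x\rangle^{-s}$ with any $s>d$.
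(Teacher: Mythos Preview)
Your proof is correct and is precisely the standard textbook argument the paper defers to: the paper does not spell out a proof but simply cites \cite[Lemma~11.1.1]{book}, whose proof is exactly this splitting of the convolution integral at $|y|=|x|/2$ combined with the integrability of $\langle\cdot\rangle^{-s}$ for $s>d$. There is nothing to add.
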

	The proof is available in several textbooks, see for example \cite[Lemma 11.1.1]{book}. 
	\begin{proposition}\label{propL42}
	Consider $\gamma\in\bN^d$ and the multiplier $v_M$ in \eqref{vM}, with $M\leq-|\gamma|-d-1$. For every $\alpha\in\bN^d$, we have $\partial_x^\gamma(x^\alpha v_M(x))\in\cC(\rd)$, and
		\[
			|\partial_x^\gamma(x^\alpha v_M(x))|\leq C\la x\ra^{-2P}, \qquad \forall P\in\bN,
		\]
		for a constant $C$ depending on $M,\gamma, d,\alpha,P$. In particular, if $M\leq -d-1$, then
		\begin{equation}\label{eqL41}
			|v_M(x)|\leq C\la x\ra^{-2N}, \qquad \forall N\in\bN,
		\end{equation}
	where $v_M$ is the multiplier defined in \eqref{vM}.
	\end{proposition}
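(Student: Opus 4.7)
The natural approach is to exploit the duality between polynomial multiplication and differentiation on the two sides of the Fourier transform. Since $v_M$ is the inverse Fourier transform of $\la\omega\ra^M$, we expect the formal identity
\[
\partial_x^\gamma(x^\alpha v_M(x)) = C_{\alpha,\gamma} \int_{\rd} e^{2\pi i x\cdot\omega}\,\omega^\gamma\, \partial_\omega^\alpha \la\omega\ra^M\,d\omega,
\]
with $C_{\alpha,\gamma} = (2\pi i)^{|\gamma|}/(-2\pi i)^{|\alpha|}$. The key classical estimate $|\partial^\beta_\omega \la\omega\ra^M| \lesssim \la\omega\ra^{M-|\beta|}$ shows that the integrand above is bounded by $\la\omega\ra^{|\gamma|+M-|\alpha|}$, which under the hypothesis $M \leq -|\gamma|-d-1$ is controlled by $\la\omega\ra^{-d-1-|\alpha|} \in L^1(\rd)$. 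This justifies the identity by differentiation under the integral sign and integration by parts in $\omega$ (all boundary terms vanish thanks to the decay of $\partial^\alpha\la\omega\ra^M$ at infinity), and shows that $\partial_x^\gamma(x^\alpha v_M(x))$ is continuous and bounded.

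To get the rapid decay in $x$, I would use the standard trick
\[
\la x\ra^{2P} e^{2\pi i x\cdot\omega} = \Bigl(1-\tfrac{1}{4\pi^2}\Delta_\omega\Bigr)^P e^{2\pi i x\cdot\omega}, \qquad P\in\bN,
\]
and integrate by parts $2P$ times to obtain
\[
\la x\ra^{2P}\partial_x^\gamma(x^\alpha v_M(x)) = C_{\alpha,\gamma}\int_{\rd} e^{2\pi i x\cdot\omega}\,\Bigl(1-\tfrac{1}{4\pi^2}\Delta_\omega\Bigr)^P\!\bigl[\omega^\gamma\partial_\omega^\alpha\la\omega\ra^M\bigr]\,d\omega.
\]
Expanding by Leibniz, the new integrand is a finite linear combination of terms $(\partial^{\beta_1}\omega^\gamma)\cdot(\partial^{\beta_2+\alpha}\la\omega\ra^M)$ with $|\beta_1|+|\beta_2|\leq 2P$, each bounded by $\la\omega\ra^{|\gamma|-|\beta_1|+M-|\beta_2|-|\alpha|}\leq \la\omega\ra^{|\gamma|+M-|\alpha|}$. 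Hence the integrand is again dominated by the $L^1$ function $\la\omega\ra^{-d-1-|\alpha|}$ (independently of $P$), and taking absolute values yields $|\la x\ra^{2P}\partial_x^\gamma(x^\alpha v_M(x))|\leq C_{\alpha,\gamma,M,d,P}$, which is the desired estimate. The particular case $\alpha=\gamma=0$ immediately gives \eqref{eqL41}.

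The main technical point to take care of is the rigorous justification of the two integration-by-parts steps (first in $\omega$ to produce $\omega^\gamma\partial^\alpha\la\omega\ra^M$, then to produce the $\la x\ra^{2P}$-factor). Both can be handled either by a standard cutoff-and-pass-to-the-limit argument, or simply by invoking the tempered distribution framework, since $\la\omega\ra^M\in\cS'(\rd)$ and all operations reduce to standard Fourier calculus on Schwartz-integrable functions once the hypothesis $M\leq -|\gamma|-d-1$ guarantees that $\omega^\gamma\partial^\alpha\la\omega\ra^M$ and all its derivatives up to order $2P$ lie in $L^1(\rd)$.
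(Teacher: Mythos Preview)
Your proof is correct and follows essentially the same approach as the paper: both arguments write $\partial_x^\gamma(x^\alpha v_M)$ as the inverse Fourier transform of $\omega^\gamma\partial_\omega^\alpha\la\omega\ra^M$ (up to a constant), and then trade powers of $x$ for $\omega$-derivatives to obtain an integrand uniformly dominated by $\la\omega\ra^{-d-1}$. The only cosmetic difference is that the paper multiplies by monomials $x^\beta$ with $|\beta|=2P$, whereas you use the operator $(1-\tfrac{1}{4\pi^2}\Delta_\omega)^P$ coming from $\la x\ra^{2P}$; these are equivalent.
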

	\begin{proof}
		To be definite, we recall the easy argument. Write
		\[
			x^\beta\partial_x^\gamma(x^\alpha v_M(x))=C_{\alpha,\beta,\gamma}\int_{\rd}e^{2\pi ix\omega}\partial_\omega^\beta(\omega^\gamma\partial^\alpha_\omega\la \omega\ra^M)d\omega.
		\]
		If $|\gamma|+M\leq -d-1$, then for every $\beta\in\bN^d$,
		\[
			|\partial^\beta_\omega(\omega^\gamma\partial^\alpha\la\omega\ra^M)|\lesssim \la \omega\ra^{-d-1}.
		\]
		By taking $|\beta|=2P$ the estimates follow. For $\gamma=\alpha=0$, we obtain \eqref{eqL41}.
	\end{proof}

	\begin{proposition} \label{propL43}	For $\Phi(x,\xi)=x\xi$ in \eqref{intro4}, consider the pseudo-differential operator  in the Kohn-Nirenberg form $T_I=\sigma(x,D)$ in \eqref{K-N}
		with $\sigma\in S^0_{0,0}(\rdd)$. Then, from Proposition \ref{prop-intro1}, we have the corresponding Wigner kernel
		\[
		k_W(x,\xi,y,\eta)=\int_{\rdd}e^{-2\pi i[t(\xi-\eta)+r(y-x)]}\tilde\sigma(x,\eta,t,r)dtdr,
		\]
		where $\tilde\sigma\in S^0_{0,0}(\bR^{2d}_{x,\eta}\times\bR^{2d}_{t,r})$.
		For $z=(x,\xi)$ and $w=(y,\eta)$ in $\rdd$,  we rephrase $k_M(z,w)$ in \eqref{intro14} as
		\begin{equation}
				\label{eqL43}
				k_{W,M}(z,w)=\la D_x\ra^M\la D_\eta\ra^M k_{W,M}^\#(z,w),
		\end{equation}
		with
		\begin{equation}\label{eqL44}
			k_{W,M}^\#(z,w)=\la D_\xi\ra^M\la D_y\ra^M k_W(z,w).
		\end{equation}
Then
	\begin{equation}\label{eqL45}
		|k_{W,M}^\#(z,w)|\lesssim\la z-w\ra^{-2S}, \qquad S\in\bN.
	\end{equation}
	\end{proposition}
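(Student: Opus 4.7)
The plan is a standard integration-by-parts argument in the conjugate variables $(t,r)$, after using the Fourier multipliers to regularize the amplitude.

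First, I would rewrite $k_{W,M}^\#$ in closed form. The identity $\la D_\xi\ra^M e^{-2\pi i t\xi} = \la t\ra^M e^{-2\pi i t\xi}$, and its analogue for $\la D_y\ra^M$, follows directly from the definition of the Bessel--Sobolev potential in \eqref{vM} and Fourier inversion. Commuting the multipliers past the oscillatory integral defining $k_W$, which is justified by a preliminary Schwartz regularization of $\tilde\sigma$ in $(t,r)$ followed by a passage to the limit, yields
\[
k_{W,M}^\#(z,w) = \int_{\rdd} e^{-2\pi i[t(\xi-\eta)+r(y-x)]}\, \la t\ra^M \la r\ra^M\, \tilde\sigma(x,\eta,t,r)\, dt\, dr.
\]

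Next, I would introduce the differential operator $L := 1 - (4\pi^2)^{-1}(\Delta_t + \Delta_r)$, which satisfies $L^S e^{-2\pi i[t(\xi-\eta)+r(y-x)]} = \la z-w\ra^{2S}\, e^{-2\pi i[t(\xi-\eta)+r(y-x)]}$, since $\la z-w\ra^2 = 1 + |y-x|^2 + |\xi-\eta|^2$. Integrating by parts $S$ times (with vanishing boundary terms, thanks to the decay of $\la t\ra^M \la r\ra^M$ for $M < 0$) transfers $L^S$ onto the amplitude:
\[
\la z-w\ra^{2S}\, k_{W,M}^\#(z,w) = \int_{\rdd} e^{-2\pi i[t(\xi-\eta)+r(y-x)]}\, L^S\bigl(\la t\ra^M \la r\ra^M \tilde\sigma(x,\eta,t,r)\bigr) dt\, dr.
\]

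Third, I would estimate the amplitude uniformly in $(x,\xi,y,\eta)$. By Leibniz's rule, $L^S(\la t\ra^M \la r\ra^M \tilde\sigma)$ is a finite sum of terms of the form $\partial_t^{\alpha_1}\partial_r^{\alpha_2}(\la t\ra^M \la r\ra^M) \cdot \partial_t^{\beta_1}\partial_r^{\beta_2}\tilde\sigma$ with total order at most $2S$. Standard weight estimates give $|\partial_t^{\alpha_1}\partial_r^{\alpha_2}(\la t\ra^M \la r\ra^M)| \le C\, \la t\ra^{M} \la r\ra^{M}$, while $\tilde\sigma \in S^0_{0,0}(\bR^{4d})$ forces the derivatives of $\tilde\sigma$ to be uniformly bounded in all four variables. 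Fixing $M \le -d-1$, a choice compatible with the hypothesis of Theorem \ref{intro-thm2} and with Proposition \ref{propL42}, makes $\la t\ra^M \la r\ra^M$ integrable on $\rdd$, so the integral is bounded by a constant $C_S$ depending only on $S$, $d$, and finitely many seminorms of $\tilde\sigma$. This is exactly the desired estimate.

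The main obstacle is of a technical rather than conceptual nature: rigorously justifying the commutation of the Fourier multipliers $\la D_\xi\ra^M \la D_y\ra^M$ with the oscillatory integral defining $k_W$, which a priori is only a tempered distribution. This is handled by approximating $\tilde\sigma$ by Schwartz functions in $(t,r)$ uniformly in $(x,\eta)$, using that $L$ preserves $\cS$ and that the relevant derivatives of $\tilde\sigma$ are bounded. A secondary point is to observe that a single $M$ works for all $S \in \bN$: this is automatic because differentiating in $(t,r)$ can only improve the decay of the weight $\la t\ra^M \la r\ra^M$, so no $S$-dependent threshold on $M$ arises.
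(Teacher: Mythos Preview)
Your argument is correct and more streamlined than the paper's. The key observation you make---that $\tilde\sigma$ is independent of $\xi$ and $y$, so $\la D_\xi\ra^M$ and $\la D_y\ra^M$ act on $k_W$ simply as multiplication of the amplitude by $\la t\ra^M\la r\ra^M$---is not used in the paper. Instead, the paper writes $k_{W,M}^\#$ as a convolution with the Bessel potentials $v_M(\tilde\xi)v_M(\tilde y)$ in the $(\xi,y)$ variables, leading to a $4d$-dimensional integral over $(\tilde\xi,\tilde y,t,r)$; after the integration by parts in $(t,r)$ produces polynomial factors $\tilde\xi^\alpha\tilde y^\beta$, a second integration by parts in $(\tilde\xi,\tilde y)$ (of order $L=d+1$) is needed to make the $(t,r)$-integral absolutely convergent, and Proposition~\ref{propL42} is then invoked to bound $\partial^\gamma_{\tilde\xi}(\tilde\xi^\alpha v_M(\tilde\xi))$. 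That detour forces the paper's choice $M=-3d-3$, whereas your route only requires $M\le -d-1$. The paper's longer argument is modeled on the Gaussian case in Proposition~\ref{Prop5.1}, where the amplitude genuinely grows in $(t,r)$ and the second round of integration by parts cannot be avoided; in the present quadratic-phase situation your shortcut is entirely legitimate and yields a sharper threshold on $M$.
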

	\begin{proof}
		By convolution with $v_M\otimes v_M$, with $v_M$ multiplier in \eqref{vM}, we have from \eqref{eqL44},
		\begin{align*}
			k_{W,M}^\#(z,w)&=\int_{\bR^{2d}}k_W(x,\xi-\tilde\xi,y-\tilde y,\eta)v_M(\tilde\xi)m_M(\tilde y)d\tilde\xi d\tilde y\\
			&=\int_{\bR^{4d}}e^{-2\pi i[t(\xi-\tilde\xi-\eta)+r(y-\tilde y-x)]}\tilde\sigma(x,\eta,t,r)v_M(\tilde\xi)v_M(\tilde y)d\tilde\xi d\tilde ydtdr.
		\end{align*}
		Writing
		\[
			\la 2\pi(\xi-\eta,x-y) \ra^{-2S}(1-\Delta_{t,r})^Se^{-2\pi i[t(\xi-\eta)+r(y-x)]}=e^{-2\pi i[t(\xi-\eta)+r(y-x)]},
		\]
		and integrating by parts we obtain:
		\[
		k^\#_{W,M}(z,w)=\frac{1}{\la 2\pi(z-w)\ra^{2S}}\int_{\bR^{4d}}e^{-2\pi i[t(\xi-\eta)+r(y-x)]}b(x,\eta,\tilde\xi,\tilde y,t,r)v_M(\tilde\xi)v_M(\tilde y)d\tilde\xi d\tilde ydtdr,
		\]
		where
		\begin{align*}
			b(x,\eta,\tilde\xi,\tilde y,t,r)&=(1-\Delta_{t,r})^S(e^{2\pi i(t\tilde\xi+r\tilde y)}\tilde\sigma(x,\eta,t,r))\\
			&=e^{2\pi i(t\tilde \xi+r\tilde y)}\sum_{|\alpha|+|\beta|\leq 2S}\sigma_{\alpha\beta}(x,\eta,t,r)\tilde\xi^\alpha\tilde y^\beta,
		\end{align*}
		where $\sigma_{\alpha\beta}\in S^0_{0,0}(\bR^{2d}_{x,\eta}\times\bR^{2d}_{t,r})$. Write now, with $L=d+1$,
		\begin{equation}\label{L}
			e^{2\pi i(t\tilde\xi+r\tilde y)}=\la 2\pi(t,r)\ra^{-2L}(1-\Delta_{\tilde\xi,\tilde y})^Le^{2\pi i(t\tilde \xi+r\tilde y)},
		\end{equation}
		and integrate again by parts. We obtain,
		\begin{equation}\label{eqL46}
			k_{W,M}^\#(z,w)=\frac{1}{\la 2\pi (z-w)\ra^{2S}}I(z,w),
		\end{equation}
		with
		\begin{equation}\label{eqL47}
			I(z,w)=\int_{\bR^{4d}}e^{-2\pi i[t(\xi-\tilde\xi-\eta)+r(y-\tilde y-x)]}q(x,\eta,\tilde\xi,\tilde y,t,r)d\tilde\xi d\tilde ydtdr,
		\end{equation}
		being:
		\begin{equation}\label{eqL48}
			q(x,\eta,\tilde\xi,\tilde y,t,r)=\la 2\pi(t,r)\ra^{-2d-2}\sum_{|\alpha|+|\beta|\leq 2N}\sigma_{\alpha\beta}(x,\eta,t,r)(1-\Delta_{\tilde\xi,\tilde y})^{d+1}(\tilde\xi^\alpha\tilde y^\beta v_M(\tilde\xi)v_M(\tilde y)).
		\end{equation}
		We can estimate:
		\begin{align*}
			|(1-\Delta_{\tilde\xi,\tilde y})^{d+1}(\tilde\xi^\alpha\tilde y^\beta v_M(\tilde \xi)v_M(\tilde y))|&\leq \sum_{|\gamma|+|\delta|\leq2d+2}|\partial^\gamma_{\tilde\xi}(\tilde\xi^\alpha v_M(\tilde\xi))| |\partial^\delta_{\tilde y}(\tilde y^\beta v_M(\tilde y))|\\
			&\lesssim \la \tilde\xi\ra^{-2P}\la\tilde y\ra^{-2P}, \qquad P\in\bN,
		\end{align*}
		since $|\gamma|\leq2d+2$, $|\delta|\leq 2d+2$, $-M=3d+3$, hence $-M\geq|\gamma|+d+1$,  and we can apply Proposition \ref{propL42}. Choosing $2P\geq d+1$, we have that $I(z,w)$ in \eqref{eqL47}, \eqref{eqL48} is a continuous function with
		\begin{align*}
			|I(z,w)|&\leq\int_{\bR^{4d}}|q(x,\eta,\tilde\xi,\tilde y,t,r)|d\tilde\xi d\tilde ydtdr\\
			&\lesssim \int_{\bR^{4d}}\la(t,r)\ra^{-2d-2}\la\tilde\xi\ra^{-d-1}\la\tilde y\ra^{-d-1}d\tilde\xi d\tilde ydtdr<\infty.
		\end{align*}
		Hence, from \eqref{eqL46} we deduce \eqref{eqL45} and we are done.
	\end{proof}
	\begin{proof}[Proof of Theorem \ref{intro-thm2}]\label{sec:thm1proof}
			Consider $\Phi(x,\xi)=x\xi$ in \eqref{intro4}, and the related pseudo-differential operator $T_I=\sigma(x,D)$ in the Kohn-Nirenberg form \eqref{K-N}.
 Let us prove \eqref{intro17} with
		\begin{equation}\label{eqL42}
			M=-3d-3.
		\end{equation}
	
	From \eqref{eqL43}, we have:
	\[
		|k_{W,M}(z,w)|\leq\int_{\rdd}v_M(x-\tilde x)v_M(\eta-\tilde\eta)|k_{W,M}^\#(\tilde x,y,\xi,\tilde\eta)|d\tilde xd\tilde\eta.
	\]
	Estimating $k_{W,M}^\#$ as in Proposition \ref{propL43} with $T=2N$, we have:
	\begin{align*}
		|k_{W,M}^\#(\tilde x,y,\xi,\tilde\eta)|&\lesssim \la(\tilde x-y,\tilde\eta-\xi)\ra^{-4N}\lesssim\la\tilde x-y\ra^{-2N}\la \tilde\eta-\xi\ra^{-2N}.
	\end{align*}
	Since $M\leq -d-1$, in view of our assumption \eqref{eqL42}, we may apply \eqref{eqL41} in Proposition \ref{propL42} to $v_M$. Hence, for $2N>d$,
	\begin{align*}
		|k_{W,M}(z,w)|&\lesssim \int_{\rd}\la x-\tilde x\ra^{-2N}\la \tilde x-y\ra^{-2N}d\tilde x\int_{\rd}\la\eta-\tilde\eta\ra^{-2N}\la\tilde\eta-\xi\ra^{-2N}d\tilde\eta\\
		&\lesssim \la x-y\ra^{-2N}\la\xi-\eta\ra^{-2N}\\
		&\lesssim \la z-w\ra^{-2N},
	\end{align*}
	where we applied Proposition \ref{propL41} and the translation invariance of the convolution. This concludes the proof.
\end{proof}

To prove Theorem \ref{intro-thm3} we need a few preliminaries.
The counterpart of Proposition \ref{propL43} is now the following
\begin{proposition}\label{Prop5.1}
Assume $\Phi$ and $\sigma$ as in the hypotheses of Proposition \ref{prop-intro1}.  Set \begin{equation}\label{5.2}
	\f(t)=e^{-2\pi i t^2},\quad t\in\rd.
\end{equation}
Define
\begin{equation}
	\label{eq5.3}
	k_{W,G}(z,w)=(\f\otimes \f)\ast_{x,\eta}  k_{W,G}^\#(x,\xi,y,\eta),
\end{equation}
with convolution with respect to the variables $x,\eta$, where
\begin{equation}\label{eq5.4}
	k_{W,G}^\#(z,w)=(\f\otimes \f)\ast_{\xi,y} k_W(x,\xi,y,\eta),
\end{equation}
with convolution with respect to $\xi,y$.  Introduce 
\begin{equation}\label{intro16}
\lambda_\Phi(z,w):=\la( \xi-\Phi_x(x,\eta),y-\Phi_\eta(x,\eta))\ra
=(1+|\xi-\Phi_x(x,\eta)|^2+|y-\Phi_\eta(x,\eta)|^2)^{1/2}.
\end{equation}
Then,
$$|	k_{W,G}^\#(z,w)|\lesssim \lambda_\Phi(z,w)^{-2S},\qquad\forall S\in \bN.$$
\end{proposition}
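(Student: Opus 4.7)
The strategy is to substitute the explicit formula \eqref{intro10} for $k_W$ into the definition \eqref{eq5.4} of $k_{W,G}^\#$, convert the Gaussian convolution in $(\xi,y)$ into a Gaussian damping factor in the conjugate variables $(t,r)$, and then integrate by parts in $(t,r)$ to extract the required decay in $\lambda_\Phi(z,w)$.

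First I would observe that in \eqref{intro10} the dependence on $(\xi,y)$ is confined to the plane wave $e^{-2\pi i(t\xi+ry)}$. Since $\f$ is even, the elementary computation $(\f\ast e^{-2\pi i t\,\cdot})(\xi)=\widehat{\f}(t)\,e^{-2\pi i t\xi}$ combined with Fubini's theorem (justified by the rapid decay of $\f$ and the polynomial control on $\tilde\sigma$ from Proposition \ref{prop-intro1}) gives
$$k_{W,G}^\#(z,w)=\int_{\rdd}e^{-2\pi i[t(\xi-\Phi_x(x,\eta))+r(y-\Phi_\eta(x,\eta))]}\,\tilde\sigma(x,\eta,t,r)\,\psi(t)\psi(r)\,dt\,dr,$$
where $\psi:=\widehat{\f}$ is a Gaussian, hence rapidly decreasing. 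In short, Gaussian convolution in $(\xi,y)$ has been converted into a Gaussian cutoff in the oscillation variables $(t,r)$.

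Next I would integrate by parts $S$ times in $(t,r)$ using the identity
$$(1-\Delta_{t,r})^S\,e^{-2\pi i[tA+rB]}=\la 2\pi(A,B)\ra^{2S}\,e^{-2\pi i[tA+rB]},$$
where $A:=\xi-\Phi_x(x,\eta)$, $B:=y-\Phi_\eta(x,\eta)$, so that $\la(A,B)\ra=\lambda_\Phi(z,w)$. The boundary terms vanish thanks to the Gaussian decay of $\psi\otimes\psi$, which yields
$$(2\pi)^{2S}\lambda_\Phi(z,w)^{2S}\,k_{W,G}^\#(z,w)=\int_{\rdd}e^{-2\pi i[tA+rB]}(1-\Delta_{t,r})^S\!\left[\tilde\sigma(x,\eta,t,r)\,\psi(t)\psi(r)\right]dt\,dr.$$

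The final step is to bound the right-hand side uniformly in $(z,w)$. By Leibniz's rule the operator $(1-\Delta_{t,r})^S$ distributes its derivatives between $\tilde\sigma$ and $\psi\otimes\psi$. The membership $\tilde\sigma\in E(\rdd\times\rdd)$ from Proposition \ref{prop-intro1} gives $|\partial^\gamma_{t,r}\tilde\sigma(x,\eta,t,r)|\lesssim\la(t,r)\ra^{3|\gamma|}$ uniformly in $(x,\eta)$, while every derivative of $\psi(t)\psi(r)$ is a polynomial times a Gaussian in $(t,r)$. The Gaussian absorbs every polynomial factor, so the integrand is dominated by an integrable, $(z,w)$-independent function, and the claim follows. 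The main obstacle — and the precise reason why Gaussian smoothing succeeds where the Bessel--Sobolev smoothing of Proposition \ref{propL43} cannot — is exactly this last domination: the polynomial growth exponent $3|\gamma|$ in the derivatives of $\tilde\sigma$ grows unboundedly with $S$, so only an exponentially decaying weight can compensate all orders simultaneously.
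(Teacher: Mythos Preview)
Your argument is correct and shares the paper's core mechanism: insert the expression \eqref{intro10} for $k_W$, then integrate by parts in $(t,r)$ via $(1-\Delta_{t,r})^S$ to produce the factor $\lambda_\Phi(z,w)^{-2S}$. The difference lies only in how the Gaussian convolution is processed. You carry out the $(\tilde\xi,\tilde y)$-integration \emph{first}, replacing the convolution by the explicit Gaussian damping $\psi(t)\psi(r)=\widehat\f(t)\widehat\f(r)$; after the integration by parts, the polynomial growth $\la(t,r)\ra^{3|\gamma|}$ coming from $\tilde\sigma\in E(\rdd\times\rdd)$ is then absorbed directly by the Gaussian. The paper instead keeps $(\tilde\xi,\tilde y)$ as integration variables throughout the first integration by parts, so that $(1-\Delta_{t,r})^S$ also acts on $e^{2\pi i(t\tilde\xi+r\tilde y)}$ and generates monomials $\tilde\xi^\alpha\tilde y^\beta$; it then performs a \emph{second} integration by parts, in $(\tilde\xi,\tilde y)$ with $(1-\Delta_{\tilde\xi,\tilde y})^L$ and $L=3S+d+1$, to manufacture the factor $\la(t,r)\ra^{-6S-2d-2}$ that beats $\la(t,r)\ra^{6S}$ from \eqref{5.11}. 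Your route is shorter and makes the decisive role of the Gaussian (its Fourier transform being again a Gaussian) completely explicit; the paper's two-step scheme parallels exactly the Bessel--Sobolev argument of Proposition~\ref{propL43} and thereby highlights where that argument breaks down, at the price of an extra round of integration by parts.
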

\begin{proof} We use the Wigner kernel $k_W$ in formula \eqref{intro10} of Proposition \ref{prop-intro1}. Writing for short
\begin{equation}\label{5.6}
\rho_\Phi:=\rho_\Phi(x,y,\xi,\eta,t,r)=t(\xi-\Phi_x(x,\eta))+r(y-\Phi_\eta(x,\eta))
\end{equation}
we have
\begin{equation}\label{5.7}
	k_{W,G}^\#(x,\xi,y,\eta)=\intrdd e^{-2\pi i \rho_\Phi}e^{-2\pi i (t\tilde\xi+r\tilde y)}\tilde\sigma(x,\eta,t,r)\f(\tilde\xi)\f(\tilde y)d\tilde\xi d\tilde ydt dr,
\end{equation}
with $\tilde\sigma\in E(\rdd\times\rdd)$, cf. Definition \ref{defe1}. Writing 
\begin{equation}\label{5.8}
	\lambda_\Phi(z,w)^{-2S}(1-\Delta_{t,r})^S e^{-2\pi i \rho_\Phi}= e^{-2\pi i \rho_\Phi}
\end{equation}
and integrating by parts we obtain
\begin{equation}\label{5.9}
	k_{W,G}^\#(x,\xi,y,\eta)=\lambda_\Phi(z,w)^{-2S}\intrdd e^{-2\pi i \rho_\Phi}b(x,\eta,\tilde \xi,\tilde y,t,r)\f(\tilde\xi)\f(\tilde y)d\tilde\xi d\tilde ydt dr,
\end{equation}
where, similarly to the proof of Proposition \ref{propL43}, we write
\begin{align}\label{5.10}
b(x,\eta,\tilde\xi,\tilde y,t,r)&=(1-\Delta_{t,r})^S(e^{2\pi i (t\tilde\xi+r\tilde y)}\tilde\sigma(x,\eta,t,r))\\
&=e^{2\pi i (t\tilde\xi+r\tilde y)}\sum_{|\a|+|\b|\leq 2S}\tau_{\a,\beta}(x,\eta,t,r)\tilde\xi^\a\tilde y^\beta.
\end{align}
Since 
$$|D^\a_tD^\beta_r\tilde\sigma(x,\eta,t,r)|\lesssim \la (t,r)\ra^{3(|\a|+|\beta|)},\quad (\a,\beta)\in \bN^{2d},$$
we estimate, for $|\a|+|\beta|\leq 2S$,
\begin{equation}\label{5.11}
|\tau_{\a,\beta}(x,\eta,t,r)|\lesssim \la (t,r)\ra^{6S}.  
\end{equation}
Using formula \eqref{L}  for $L=3S+d+1$, and integrating by parts in \eqref{5.9} we obtain
\begin{equation}\label{5.12}
	k_{W,G}^\#(z,w)=\lambda_\Phi(z,w)^{-2S}J(z,w)
\end{equation}
where 
\begin{equation}\label{5.13}
	J(z,w)=\intrdd e^{-2\pi i \rho_\Phi}e^{-2\pi i (t\tilde\xi+r\tilde y)}p(x,\eta,\tilde\xi,\tilde y,t,r)d\tilde\xi d\tilde ydt dr,
\end{equation}
with
\begin{align*}
	p&(x,\eta,\tilde\xi,\tilde y,t,r)=\\
	&\quad \la (t,r)\ra^{-6S-2d-2}\sum_{|\a|+|\b|\leq 2s}\tau_{\a,\beta}(x,\eta,t,r)(1-\Delta_{\tilde\xi,\tilde y})^{3S+d+1}(\tilde\xi^\a\tilde y^\beta\f(\tilde\xi)\f(\tilde y) )d\tilde\xi d\tilde ydt dr.
\end{align*}
Since, in view of \eqref{5.11},
$$\la (t,r)\ra^{-6S-2d-2}| \tau_{\a,\beta}(x,\eta,t,r)|\lesssim \la (t,r)\ra^{-2d-2} $$
and 
$$(1-\Delta_{\tilde\xi,\tilde y})^{3S+d+1}(\tilde\xi^\a\tilde y^\beta\f(\tilde\xi)\f(\tilde y))\in\cS(\rdd_{\tilde\xi,\tilde y}),
$$
we conclude from \eqref{5.13}
$$|J(z,w)|<\infty,$$
and the thesis follows from \eqref{5.12}.
\end{proof}

\begin{proposition}\label{Prop5.2}
If either the phase $\Phi$ is tame (c.f. Definition \ref{defe2}) or of the form \eqref{introbis}, the system in \eqref{intro6}
can be solved with respect to $z=(x,\xi)$ by $\chi:\rdd\to\rdd$
defined by 
\begin{equation}\label{chi}
	(x,\xi)=\chi(y,\o),
\end{equation}
hence $z=\chi(w)$, $w=(y,\eta)$, with $\chi\in\cC^\infty(\rdd)$ and 
\begin{equation}\label{5.15}
	\la z-\chi(w)\ra\asymp \lambda_\Phi(z,w),
\end{equation}
where $\lambda_\Phi$ is defined in \eqref{intro16}.
\end{proposition}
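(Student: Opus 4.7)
The plan is to treat the two structural assumptions on $\Phi$ separately, constructing $\chi$ explicitly (or via an inverse function argument) and then comparing $|z-\chi(w)|$ with $\lambda_\Phi(z,w)$ by a first-order Taylor expansion along the segment joining $x$ with the unique solution of \eqref{intro6}. First, if $\Phi$ is of the form \eqref{introbis}, then $\Phi_x(x,\eta)=\eta$ and $\Phi_\eta(x,\eta)=x-\varphi'(\eta)$; thus \eqref{intro6} admits the explicit global smooth solution
\[
\chi(y,\eta)=(y+\varphi'(\eta),\eta),
\]
and one immediately verifies $z-\chi(w)=(x-y-\varphi'(\eta),\xi-\eta)=(-(y-\Phi_\eta(x,\eta)),\,\xi-\Phi_x(x,\eta))$, whence \eqref{5.15} is in fact an equality up to the weight correction. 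Smoothness of $\chi$ follows from $\varphi\in C^\infty(\rd)$, itself a consequence of \eqref{introbis}.

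Second, for tame $\Phi$, consider the auxiliary map $F:\rdd\to\rdd$, $F(x,\eta):=(\Phi_\eta(x,\eta),\eta)$. Its Jacobian matrix is block-triangular with diagonal blocks $\partial^2_{x\eta}\Phi$ and $I_d$, so $|\det F'(x,\eta)|=|\det \partial^2_{x\eta}\Phi(x,\eta)|\geq \delta>0$ by \eqref{tame0}. Combined with the global boundedness of $\partial^\alpha\Phi$ for $|\alpha|\geq 2$ in \eqref{tame}, a standard application of Hadamard's global inverse function theorem (as already used in this setting in \cite{CGNRJMPA,ruzhsugimoto}) yields a global $C^\infty$ inverse $F^{-1}(y,\eta)=(x_0(y,\eta),\eta)$. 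Setting
\[
\chi(y,\eta):=\bigl(x_0(y,\eta),\,\Phi_x(x_0(y,\eta),\eta)\bigr)
\]
gives a smooth solution to \eqref{intro6}, uniquely characterized by $y=\Phi_\eta(x_0,\eta)$ and $\xi_0=\Phi_x(x_0,\eta)$.

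To verify \eqref{5.15} in the tame case, write $\chi(w)=(x_0,\xi_0)$ with $(x_0,\xi_0)$ as above, and expand by the fundamental theorem of calculus along the segment from $x_0$ to $x$:
\begin{align*}
y-\Phi_\eta(x,\eta) &= \Phi_\eta(x_0,\eta)-\Phi_\eta(x,\eta)= A(x,x_0,\eta)(x_0-x),\\
\xi-\Phi_x(x,\eta) &= (\xi-\xi_0)+\Phi_x(x_0,\eta)-\Phi_x(x,\eta)=(\xi-\xi_0)+B(x,x_0,\eta)(x_0-x),
\end{align*}
with $A=\int_0^1 \partial^2_{x\eta}\Phi(x+t(x_0-x),\eta)\,dt$ and $B=\int_0^1 \partial^2_{xx}\Phi(x+t(x_0-x),\eta)\,dt$. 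By \eqref{tame}, $A$ and $B$ are uniformly bounded, and by \eqref{tame0} the same pointwise estimates for the integrand give that $A$ is invertible with $\|A^{-1}\|$ uniformly bounded (the set of matrices with $|\det|\ge\delta$ and norm $\le C$ has uniformly bounded inverses). Solving $x_0-x=A^{-1}(y-\Phi_\eta(x,\eta))$ and substituting into the second equation yields
\[
|x-x_0|+|\xi-\xi_0|\lesssim |\xi-\Phi_x(x,\eta)|+|y-\Phi_\eta(x,\eta)|,
\]
while the reverse bound follows directly from the boundedness of $A,B$. This gives $|z-\chi(w)|\asymp(|\xi-\Phi_x(x,\eta)|^2+|y-\Phi_\eta(x,\eta)|^2)^{1/2}$, which is \eqref{5.15} after adding $1$ under the square root.

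The only non-routine step is the global inversion of $F$: one must ensure surjectivity (not just the local inverse theorem), but this is precisely what the combination of uniform non-degeneracy \eqref{tame0} and the global symbol estimates \eqref{tame} provides via Hadamard's theorem. The rest is bookkeeping with the Taylor remainder.
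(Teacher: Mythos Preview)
Your treatment of the case \eqref{introbis} is correct and matches the paper, which simply declares it ``obvious''. For the tame case the paper gives no argument at all---it just cites \cite[Lemmas~6.1.4 and~6.4.2]{Elena-book}---so your self-contained construction of $\chi$ via Hadamard's theorem and the subsequent Taylor estimate is genuinely more informative than what the paper offers.

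There is, however, a real gap in your verification of \eqref{5.15}. You claim that the averaged matrix
\[
A=\int_0^1 \partial^2_{x\eta}\Phi\bigl(x+t(x_0-x),\eta\bigr)\,dt
\]
is invertible with uniformly bounded inverse because each value of the integrand lies in the set $\{M:|\det M|\ge\delta,\ \|M\|\le C\}$. That inference fails for $d\ge 2$: an average of such matrices need not be invertible (in $\bR^2$, the rotations by $0$ and by $\pi$ both have determinant $1$, yet their mean is the zero matrix; and one can pass from one to the other through rotations, keeping $\det\equiv 1$). So the step ``$x_0-x=A^{-1}(y-\Phi_\eta(x,\eta))$'' is not justified as written.

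The repair is already implicit in your construction and avoids $A$ altogether. From $|\det F'|\ge\delta$ and $\|F'\|\le C$ pointwise you get $\|(F')^{-1}\|\le C'$ pointwise (via the adjugate), and this is precisely the hypothesis in the Hadamard--Plastock form of the global inverse theorem. Once $F$ is a global diffeomorphism, $(F^{-1})'=(F')^{-1}\!\circ F^{-1}$ is uniformly bounded, so $F^{-1}$ is globally Lipschitz. Then
\[
|x-x_0|=\bigl|F^{-1}(\Phi_\eta(x,\eta),\eta)-F^{-1}(y,\eta)\bigr|\lesssim |y-\Phi_\eta(x,\eta)|,
\]
and combining this with $|\xi-\xi_0|\le|\xi-\Phi_x(x,\eta)|+C|x-x_0|$ gives the $\lesssim$ direction of \eqref{5.15}. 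The reverse inequality only uses boundedness of $A$ and $B$, which is unproblematic.
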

\begin{proof}
	If $\Phi$ is tame we refer to \cite[Lemma 6.1.4 and Lemma 6.4.2]{Elena-book}. In the case of $\Phi$ of the form \eqref{introbis} the proof is obvious.
\end{proof}
\begin{proposition}\label{Prop5.3}
Under the assumptions of Proposition \ref{Prop5.2} the system \eqref{intro6} can be solved with respect to $(x,\eta)$ by $x=\psi_1(y,\xi)$, $\eta=\psi_2(y,\xi)$, with
\begin{equation}\label{5.16}
		\la z-\chi(w)\ra\asymp \la (x-\psi_1(y,\xi),\eta-\psi_2(y,\xi))\ra.
	\end{equation}
\end{proposition}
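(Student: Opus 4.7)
I would recast the system \eqref{intro6} as the global inversion of the smooth map
\begin{equation*}
G:\rdd\to\rdd,\qquad G(x,\eta)=\bigl(\Phi_\eta(x,\eta),\,\Phi_x(x,\eta)\bigr),
\end{equation*}
so that solving \eqref{intro6} with respect to $(x,\eta)$ is literally writing $(x,\eta)=G^{-1}(y,\xi)=:(\psi_1(y,\xi),\psi_2(y,\xi))$. Under the hypotheses of Proposition~\ref{Prop5.2} the second-order derivatives $\partial^\alpha_x\partial^\beta_\eta\Phi$ with $|\alpha|+|\beta|=2$ all lie in $S^0_{0,0}(\rdd)$, so the Jacobian matrix $DG$ is smooth and uniformly bounded on $\rdd$.

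\textbf{Existence of $\psi_1,\psi_2$.} In the case \eqref{introbis} one has $G(x,\eta)=(x-\varphi'(\eta),\eta)$, whose inverse is read off explicitly, namely $\psi_1(y,\xi)=y+\varphi'(\xi)$, $\psi_2(y,\xi)=\xi$, and $|\det DG|\equiv1$. In the tame case I would argue as for Proposition~\ref{Prop5.2}, combining \eqref{tame0} with the $S^0_{0,0}$-bounds \eqref{tame} to obtain a uniform lower bound on $|\det DG|$ and then applying a global inversion theorem (Hadamard, or the same argument used in \cite[Lemma~6.1.4]{Elena-book}) to conclude that $G$ is a $\cC^\infty$-diffeomorphism of $\rdd$ onto itself.

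\textbf{Norm equivalence.} Setting $u=(x,\eta)$, $v=(y,\xi)$ and $u_0=G^{-1}(v)=(\psi_1(y,\xi),\psi_2(y,\xi))$, Proposition~\ref{Prop5.2} reduces \eqref{5.16} to $\la u-u_0\ra\asymp\la G(u)-G(u_0)\ra$, since $G(u)-G(u_0)=G(u)-v$ is (up to a permutation of components) the vector $(\xi-\Phi_x(x,\eta),\,y-\Phi_\eta(x,\eta))$ appearing in $\lambda_\Phi(z,w)$. The upper bound $|G(u)-G(u_0)|\leq\|DG\|_\infty\,|u-u_0|$ is immediate from the mean value theorem and the uniform bound on $DG$. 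For the reverse inequality I would use $DG^{-1}(v)=[DG(G^{-1}(v))]^{-1}$ together with Cramer's rule: entries of $[DG]^{-1}$ are cofactors of $DG$ divided by $\det DG$, hence uniformly bounded thanks to the lower bound on $|\det DG|$; the mean value theorem then gives $|u-u_0|\leq\|DG^{-1}\|_\infty\,|G(u)-G(u_0)|$. The passage from these $|\cdot|$-estimates to the $\la\cdot\ra$-estimate is standard.

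\textbf{Main obstacle.} The delicate step is the global (as opposed to local) invertibility of $G$ on all of $\rdd$, together with the uniform Lipschitz bound on $G^{-1}$; both rest on the uniform control on the second-order derivatives built into the tame assumption, and are trivial in case \eqref{introbis}. Once $G$ is known to be globally bi-Lipschitz with uniform constants, \eqref{5.16} follows mechanically.
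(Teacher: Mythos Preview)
For tame phases your bi-Lipschitz reformulation via $G=(\Phi_\eta,\Phi_x)$ is close in spirit to the paper's own argument, which writes $\eta=\psi_2(y,\chi_2(y,\eta))$ and $\chi_1(y,\eta)=\psi_1(y,\chi_2(y,\eta))$ and then uses Lipschitz bounds on $\psi_1,\psi_2$ in the second variable to compare the two brackets directly; both routes ultimately rest on uniform control of $D\psi=DG^{-1}$.

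There is, however, a concrete gap. Your blanket assertion that under both hypotheses of Proposition~\ref{Prop5.2} ``the second-order derivatives $\partial^\alpha_x\partial^\beta_\eta\Phi$ with $|\alpha|+|\beta|=2$ all lie in $S^0_{0,0}(\rdd)$'' is correct for tame phases (by \eqref{tame}) but \emph{fails} for phases of type \eqref{introbis}: there one only assumes $\partial^\beta_\xi\varphi\in S^0_{0,0}$ for $|\beta|\ge3$, so $\Phi_{\eta\eta}=-\varphi''(\eta)$ need not be bounded. In the paper's own running example $\varphi(\xi)=\xi^3$ one has $DG(x,\eta)=\bigl(\begin{smallmatrix}1&-6\eta\\0&1\end{smallmatrix}\bigr)$, and neither $\|DG\|_\infty$ nor $\|DG^{-1}\|_\infty$ is finite; both mean-value inequalities in your ``Norm equivalence'' paragraph therefore break down. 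Concretely, taking $y=0$, $\eta=n$, $x=3n^2$, $\xi=n+1$ gives $\la z-\chi(w)\ra=\sqrt2$ while $\la(x-\psi_1(y,\xi),\eta-\psi_2(y,\xi))\ra\asymp n$, so a uniform bi-Lipschitz estimate for $G$ is simply unavailable in this regime.

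A subtler issue also affects the tame case. Condition \eqref{tame0} is a lower bound on $|\det\partial^2_{x,\eta}\Phi|$, the determinant of the \emph{mixed} $d\times d$ Hessian block, whereas $\det DG$ is the determinant of the full (reordered) $2d\times 2d$ Hessian of $\Phi$; these are different quantities, and the latter can vanish for tame $\Phi$ (e.g.\ $\Phi(x,\eta)=x\eta+\tfrac12x^2+\tfrac12\eta^2$ in $d=1$, where $\det DG\equiv0$ and $G$ is not even locally invertible). Hence the step ``combining \eqref{tame0} with \eqref{tame} to obtain a uniform lower bound on $|\det DG|$'' does not follow as written; the paper itself defers the existence and Lipschitz control of $\psi$ to the lemmas cited from \cite{Elena-book} rather than deriving them from \eqref{tame0} alone.
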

\begin{proof}
Let $\Phi$ be a tame phase function. Observe that 
$\eta=\psi_2(y,\chi_2(y,\eta))$ by definition of $\chi$ and $\psi=(\psi_1,\psi_2)$. Hence
$$|\eta-\psi_2(y,\xi)|=|\psi_2(y,\chi_2(y,\eta))-\psi_2(y,\xi)|\leq C|\xi-\chi_2(y,\eta)|$$
by property \eqref{tame0} of tame phases. Moreover,
$$|x-\psi_1(y,\xi)|\leq |x-\chi_1(y,\xi)|+|\chi_1(y,\eta)-\psi_1(y,\xi)|.$$
Since $\chi_1(y,\eta)=x=\psi_1(y,\eta)$ and $\psi_1(y,\xi)=\psi_1(y,\chi_2(y,\eta))$
we can estimate
$$|x-\psi_1(y,\xi)|\leq|x-\chi_1(y,\xi)|+C|\xi-\chi_2(y,\eta)|.$$
Hence, we obtain
 $$\la( x-\psi_1(y,\xi),\eta-\psi_2(y,\xi))\ra\lesssim \la z-\chi(w)\ra.$$
 The converse estimate is proved similarly. \par The case $\Phi$ of the form \eqref{introbis} is obvious.
\end{proof}

The preceding propositions are the pillars for the proof of Theorem \ref{intro-thm3}.
\begin{proof}[Proof of Theorem \ref{intro-thm3}]
	We argue as in the proof of Theorem \ref{intro-thm2}, by replacing Bessel smoothing with the Gaussian one. We split $G(z,w)=e^{-2\pi(z^2+w^2)}$ defined in \eqref{intro12} as
	\begin{equation}\label{5.1}
		G(z,w)=\f(x)\f(\xi)\f(y)\f(\eta),\quad z=(x,\xi),\,\,w=(y,\eta),
	\end{equation}
	where $\f$ is   the Gaussian in \eqref{5.2}.	Then by Proposition \ref{Prop5.1} 
	\begin{equation*}
		|k_{W,G}^\#(z,w)|\lesssim \lambda_\Phi^{-2S}, \qquad \forall S\in\bN.
	\end{equation*}
From \eqref{eq5.3} we obtain 
\begin{equation}
	\label{5.17}
	|k_{W,G}(z,w)|=\intrdd\f(x-\tilde x) \f(\eta-\tilde\eta)|  k_{W,G}^\#(\tilde x,\xi,y,\tilde \eta)|d\tilde xd\tilde\eta.
\end{equation}
We estimate $k_{W,G}^\#$ as in Proposition \eqref{Prop5.1} by using Proposition \ref{Prop5.2} and Proposition \ref{Prop5.3}:
\begin{align*}
|k_{W,G}^\#(z,w)|&\lesssim \lambda_\Phi(z,w)^{-2S}\lesssim \la z-\chi(w)\ra^{-2S}\lesssim \la( x-\psi_1(y,\xi),\eta-\psi_2(y,\xi))\ra^{-2S}.
\end{align*}
Taking $S=N$, we estimate
$$|k_{W,G}^\#(\tilde x, y,\xi,\tilde\eta)|\lesssim \la(\tilde x -\psi_1(y,\xi),\tilde\eta-\psi_2(y,\xi))\ra^{-2N},$$
hence, by \eqref{5.17},
\begin{align*}
|k_{W,G}^\#(z,w)|\lesssim \left(\intrd \f(x-\tilde x)\la \tilde x-\psi_1(y,\xi)\ra^{-2N}d\tilde x\right)\left(\intrd \f(\eta-\tilde \eta)\la \tilde \eta-\psi_2(y,\xi)\ra^{-2N}d\tilde \eta\right),
\end{align*}
where we may further estimate 
$$\f(x-\tilde x)\lesssim\la x-\tilde x\ra^{-2N},\quad \f(\eta-\tilde \eta)\lesssim\la \eta-\tilde \eta\ra^{-2N}.$$
From the translation invariance of the convolution and the weight property in Proposition \ref{propL41} with $2N>d$, we conclude
\begin{align*}
|k_{W,G}^\#(z,w)|&\lesssim \la x-\psi_1(y,\xi)\ra^{-2N}\la \eta-\psi_2(y,\xi)\ra^{-2N}\\
&\lesssim \la  (x-\psi_1(y,\xi),  \eta-\psi_2(y,\xi))\ra^{-2N}\\
&\lesssim \la z-\chi(w)\ra^{-2N},
\end{align*}
where we used Proposition \ref{Prop5.3} again. This concludes the proof.
\end{proof}

We now turn our attention to the Gabor matrix of the operator $T$, showing that results of the Wigner kernel allow to infer decay estimates for the Gabor matrix, as exhibited in Corollary \ref{intro-cor4}.
\begin{proof}[Proof of Corollary \ref{intro-cor4}]\label{sec:thm3proof}
As observed in the Introduction, this proof follows from \eqref{intro25}. For a detailed proof of \eqref{intro25} we address to \cite{CGR2024-2}. For the benefit of the reader, we sketch here the argument. Namely, from the covariance of the Wigner distribution we have
	\begin{align*}
	|\la T\pi(z)g,\pi(w)\gamma\ra|^2&=\la T\pi(z)g,\pi(w)\gamma\ra\overline{\la T\pi(z)g,\pi(w)\gamma\ra}\notag\\
	&=\la T\pi(z)g\otimes\overline{T\pi(z)g},\pi(w)\gamma\otimes\overline{\pi(w)\gamma}\ra\notag\\
	&=\la W(T\pi(z)g),W(\pi(w)\gamma)\ra\notag\\
	&=\la k_W, W(\pi(w)\gamma)\otimes W(\pi(z)g)\ra\notag\\
	&=\int_{\bR^{4d}}k_W(u,v)W(\pi(w)\gamma)(u)W(\pi(z)g)(v)dudv\notag\\
	&=\int_{\bR^{4d}}k_W(u,v)W\gamma(u-w)Wg(v-z)dudv\label{E2}\\
	&=[k_W\ast (W\gamma\otimes Wg)](w,z),\quad z,w\in\rdd.\notag
\end{align*}
Taking $g=\gamma$ as in \eqref{intro24} we obtain in particular 
$$(W\gamma \otimes Wg)(z,w)=e^{-2\pi(z^2+w^2)}=G(z,w),$$
according to the notation in \eqref{intro12}, see for example \cite[Lemma 1.3.12]{book}. Thus, formula \eqref{intro25} and Corollary \ref{intro-cor4} follow.
\end{proof}

\section{Examples}\label{sec:examples}
Elementary applications of the results in the preceding sections are provided for the propagators of Schr\"odinger equation
\begin{equation}\label{sec7-1}
	\begin{cases}
		i\partial_tu-2\pi \f(D)u=0 & \text{$x\in\rd$, $t\in\bR$},\\
		u(0,x)=u_0(x),
	\end{cases}
\end{equation}
	where $\f(D)$ is the Fourier multiplier 
\begin{equation}\label{sec7-2}
	\f(D)f(x)=\int_{\rd}e^{2\pi i\xi x}\f(\xi)\hat f(\xi)d\xi.
\end{equation}
By Fourier transforming, a solution of \eqref{sec7-1} is given by a FIO of type I:
\begin{equation}\label{sec7-3}
	u(t,x)=\int_{\rd}e^{2\pi i\Phi_t(x,\xi)}\hat u_0(\xi)d\xi,
\end{equation}
with phase function
\begin{equation}\label{sec7-4}
	\Phi_t(x,\xi)=x\xi-t\f(\xi).
\end{equation}
Under the assumption
\begin{equation}\label{sec7-5}
	|\partial^\alpha\f(\xi)|\leq C_\alpha \qquad \mbox{for} \,\,|\alpha|\geq3, \quad \xi\in\rd,
\end{equation}
we may apply Proposition \ref{prop-intro1} and obtain the Wigner kernel $k_{W,t}$ of the operator \eqref{sec7-3}, namely
\begin{equation}\label{sec7-6}
	k_{W,t}(x,\xi,y,\eta)=\int_{\rdd}e^{-2\pi i[s(\xi-\eta)+r(y-x+t\f_\eta(\eta))]}a_t(\eta,r)dsdr,
\end{equation}
where $a_t\in E(\rdd_{x,\eta}\times\rdd_{s,r})$ is in the present case independent of $x,s$. Considering the Hamiltonian flow
\begin{equation}\label{sec7-7}
	z=\chi_t(w)=(y+t\f_\eta(\eta),\eta), \qquad z=(x,\xi), \ w=(y,\eta),
\end{equation}
from Theorem \ref{intro-thm3} we have:
\begin{equation}\label{sec7-8}
	| k_{W,G,t}(z,w)|\lesssim\frac{1}{\la z-\chi_t(w)\ra^{2N}}, \qquad N\in\bN,
\end{equation}
where $k_{W,G,t}$ is the Gaussian smoothing of $k_{W,t}$, as in \eqref{intro11}.\par

Aiming to give an explicit example of \eqref{sec7-6}, \eqref{sec7-7} and \eqref{sec7-8}, we consider the elementary case of the cubic Hamiltonian in dimension $d=1$
\begin{equation}\label{9}
	\f(\xi)=\xi^3,\quad\mbox{i.e.}, \quad \f(D)=D^3_x,
\end{equation}
where we mean $D_x=\frac{1}{2\pi i}\partial_x$. Fixing the time $t=1$ for simplicity,  from \eqref{sec7-3}  we have the operator 
\begin{equation}\label{10}
	u(1,x)=T_If(x)=\int_{\bR} e^{2\pi i \Phi\phas}\hat{f}(\xi) \,d\xi
\end{equation} 
with 
\begin{equation}\label{11}
\Phi\phas=x\xi-\xi^3.
\end{equation}
The preceding formula \eqref{sec7-6} for the Wigner kernel reads
\begin{equation}\label{12}
	k_W(x,\xi,y,\eta)=\int_{\bR^2} e^{-2\pi i[s(\xi-\eta)+r(y-x+3\eta^2)]}a(r)drds
\end{equation}
where to compute the symbol $a(r)$, depending only on $r\in\bR$, we use Theorem \ref{teor3.6} above.
We have
\begin{equation}\label{13}
	k_W(x,\xi,y,\eta)=\int_{\bR^2} e^{2\pi i \tau(x,\xi,y,\eta,s,r)} dsdr,
\end{equation}
with 
\begin{align}\notag
	\tau(x,\xi,y,\eta)&=\Phi\left(x+\frac{s}{2},\eta+\frac{r}{2}\right)-\Phi\left(x-\frac{s}{2},\eta-\frac{r}{2}\right)-\left(\xi s+y r\right)\\ \label{14}
	&=\left(x+\frac{s}{2}\right)\left(\eta+\frac{r}{2}\right)-\left(\eta+\frac{r}{2}\right)^3\!\!\!-\left(x-\frac{s}{2}\right)\left(\eta-\frac{r}{2}\right)+\left(\eta-\frac{r}{2}\right)^3\!\!\!-\left(\xi s+yr\right)\\ \notag
	&=s(\eta-\xi)+r(x-y-3\eta^2)-\frac{r^3}{4}.\notag
\end{align}
Comparing \eqref{12} with \eqref{13} and \eqref{14}, we obtain that the symbol $a(r)$ in \eqref{12} is given by 
\begin{equation}\label{15}
	a(r)=e^{\pi i \frac{ r^3}{2}}.
\end{equation}
If we normalize the definition of the Airy function as
\begin{equation}\label{16}
	\Airy(\lambda)=\cF\left(e^{\pi i \frac{ r^3}{2}}\right)(\lambda),\quad r,\lambda \in\bR,
\end{equation}
where $\cF$ is the Fourier transform in $\cS'(\bR)$, we obtain from \eqref{12} the new expression for the Wigner kernel 
\begin{equation}\label{17}
k_W(x,\xi,y,\eta)=\Airy(y-x+3\eta^2)\delta_{\xi-\eta}.
\end{equation}
Starting from \eqref{17} we may simplify the smoothing arguments in the Proof of Theorem \ref{intro-thm3} as follows. Write
\begin{equation}\label{18}
	k_{W,G}^\#(x,\xi,y,\eta)=k_W(x,\cdot,\cdot,\eta)\ast_{y,\xi} e^{-2\pi (y^2+\xi^2)}=\psi(y-x+3\eta^2)e^{-2\pi (\xi-\eta)^2} 
\end{equation}
where
\begin{equation}\label{19}
	\psi(\lambda)=\Airy(\lambda)\ast e^{-2\pi \lambda^2}.
\end{equation}
The crucial point is to observe that $\psi\in\cS(\bR)$, see Remark \ref{rem1} below. Writing
\begin{equation}\label{20}
	z=\chi(w)=(y+3\eta^2,\eta),\quad z=(x,\xi),\,w=(y,\eta),
\end{equation}
from \eqref{18} we obtain
\begin{equation}\label{21}
|k_{W,G}^\#(z,w)|\lesssim \frac{1}{\la z-\chi(w)\ra^N},
\end{equation}
for every $N\in\bN$.
Arguing as in the proof of Theorem \ref{intro-thm3} we may obtain the same estimate for
$$k_{W,G}=k_{W,G}^\#\ast e^{-2\pi (x^2+\eta^2)}.$$
\begin{remark}\label{rem1} It is clear from \eqref{15} that we may expect as elements of the class $E(\bR^2\times\bR^2)$ in \eqref{intro8} functions $a(x,\eta,r,s)=a(r)$ of the sole variable $r\in\bR$ given by
	$$a(r)=e^{ic r^n},\,n\in\bN,\,n\geq 2,\,c\in\bR,\,c\not=0. $$
	Since $a\in \cC^{\infty}(\bR)$ and for every $\a\in\bN$
	$$|D^\alpha a(r)|\lesssim\l r\ra^{(n-1)\alpha},$$
	we have that $a(r)$ is a multiplier of $\cS(\bR)$, i.e., if $f\in\cS(\bR)$, then $a f\in\cS(\bR)$. Consequently, by applying the Fourier transform in $\cS(\bR)$, 
	$$\hat{a}(\lambda)=\cF(e^{ic r^n})(\lambda),\quad r,\lambda\in\bR,$$
	is a convolutor of $\cS(\bR)$, i.e., if $f\in\cS(\bR)$, then $\hat{a}\ast f\in\cS(\bR)$.
	According to the terminology of L. Schwartz \cite[Chapter VII, Sections 5 and 8]{Schwartz}, $a\in \mathcal{O}_M$, class of the slowly increasing functions, and $\hat{a}\in \mathcal{O}_C'$, class of the rapidly decreasing distributions.
\end{remark}
\begin{remark}\label{rem2}
	Several papers in the literature are devoted to results for the Wigner transform of oscillatory functions:
	$$W(e^{2\pi i t \f(x)}A(x))$$
	under different assumptions on the amplitude $A(x)$ and on the phase $\f(x)$, with semi-classical parameter $\eps=1/t$, see the classical paper of M. V. Berry \cite{Berry77} and subsequent contributions, cf.  for example B. Hanin, S. Zelditch \cite{HaninZelditch2022}. Regarding the oscillatory function as a multiplier or Fourier multiplier, we can meet our setting and deduce more precise results for $a_t$ in \eqref{sec7-6} and its transform. In the perspective of the estimates \eqref{sec7-8}, however, information on $a_t$ is cancelled by the exponential smoothing.
\end{remark}
Finally, consider the case of the free particle in dimension $d=1$, i.e.
$$\f(\xi)=\xi^2,\quad\f(D)=D^2_x,$$
in \eqref{sec7-3}, \eqref{sec7-4}, where again we fix $t=1$ for simplicity. The Wigner kernel in \eqref{sec7-6} reads
$$k_W(x,\xi,y,\eta)=\int_{\bR^2} e^{-2\pi i [s(\xi-\eta)+r(y-x+2\eta)]}\,dsdr=\delta_{y-x+2\eta,\xi-\eta}.
$$
Similarly to the pseudo-differential case in Theorem \ref{intro-thm2}, a smoothing by Bessel potentials is sufficient to obtain the estimates \eqref{sec7-8} for every $N\in\bN$. Namely,
$$k_{W,-2}^\#=\la D_y\ra^{-2}\la D_\xi\ra^{-2}k_W=(v_{-2}\otimes v_{-2})\ast_{y,\xi} k_W=v_{-2}(y-x+2\eta)v_{-2}(\xi-\eta).$$
Since $v_{-2}$ is a rapidly decreasing continuous function, explicitly
$$v_{-2}(x)=\int_{\bR} e^{2\pi i x\omega}\frac{1}{1+\omega^2} d\omega=\pi e^{-2\pi |x|},\quad x\in\bR,$$
the estimates
$$|k_{W,-2}(x,\xi,y,\eta)|=|\la D_x\ra^{-2}\la D_\eta\ra^{-2}k_{W,-2}^\#(x,\xi,y,\eta)|\lesssim \frac{1}{\la (x-(y+2\eta),\xi-\eta)\ra^N}$$
follow for every $N\in\bN$.

Note that for the cubic phase function \eqref{11} one could not replace the Gaussian by a Bessel $v_M$ smoothing, since in \eqref{19} the convolution with the Airy function would not be any more of rapid decay, see \emph{``Remarque importante"} at the end of page 244 in \cite{Schwartz}.
\section*{Acknowledgements}
The authors have been supported by the Gruppo Nazionale per l'Analisi Matematica, la Probabilità e le loro Applicazioni (GNAMPA) of the Istituto Nazionale di Alta Matematica (INdAM).
\begin{appendix}\label{AppA}
	\section{Ghostbusters}
	To validate the importance of our approach, we provide examples of how Gaussian convolution reduces ghost frequencies. To facilitate the graphical feedback, in the following discussion we will take $x,\xi\in\bR$. Let us consider the tempered distribution $1+\delta$, where $\la\delta,f\ra=\overline{f(0)}$ is the point mass measure. For the benefit of the reader, we report the computation of $W(1+\delta)$. For a given $g\in\cS(\rdd)$,
	\begin{align*}
		\la W\delta,g\ra&=\la \cF_2\mathfrak{T}_w(\delta\otimes\delta),g\ra=\la \delta\otimes\delta,\mathfrak{T}_w^{-1}\cF_2^{-1}g\ra=\overline{\mathfrak{T}_w^{-1}\cF_2^{-1}g(0,0)}=\overline{\cF_2^{-1}g(0,0)}\\
		&=\int_{\rd}\overline{g(0,y)}dy=\la \delta\otimes 1,g\ra.
	\end{align*}
	Consequently, $W\delta=\delta\otimes 1$. Analogously,
	\begin{align*}
		\la W1,g\ra&=\la 1\otimes1,\mathfrak{T}_w^{-1}\cF_2^{-1}g\ra=\la\delta\otimes\delta,\cF\mathfrak{T}_w^{-1}\cF_2^{-1}g\ra=\overline{\cF(\mathfrak{T}_w^{-1}\cF_2^{-1}g)(0,0)}\\
		&=\overline{\cF\cF_2^{-1}g(0,0)}=\overline{\cF_1g(0,0)}=\int_{\rd} \overline{g(z,0)}dz=\la 1\otimes\delta,g\ra,
	\end{align*}
	where $\cF_1F(\xi,y)=\int_{\rd}F(t,y)e^{-2\pi i\xi t}dt$ is the Fourier transform with respect to the time variables. 
	\begin{align*}
		\la W(1,\delta),g\ra&=\la 1\otimes\delta,\mathfrak{T}_w^{-1}\cF_2^{-1}g\ra=\int_{\rd}\overline{\cF_2^{-1}g(x/2,x)}dx\\
		&=\int_{\rd}\int_{\rd}\overline{g(x/2,y)}e^{-2\pi iyx}dydx=2^d\int_{\rdd}\overline{g(x',y)}e^{-4\pi iyx'}dx'dy\\
		&=\la 2^de^{-4\pi i x\xi},g\ra.
	\end{align*}
	Therefore, $W(1,\delta)(x,\xi)=2^de^{-4\pi ix\xi}$. Using the polarization property of the Wigner distribution \eqref{polarization}, we can compute $W(1+\delta)$:
	\begin{align*}	
		W(1+\delta)=W1+W\delta+2\Re(W(1,\delta))=1\otimes\delta+\delta\otimes 1+2^{d+1}\cos(4\pi x\xi).
	\end{align*}
	The 2D and 3D representations of $W(1+\delta)$ are outlined in figure \ref{fig:1}, where phantom frequencies stand out clearly.

	\begin{figure}
		\includegraphics[width=0.5\textwidth]{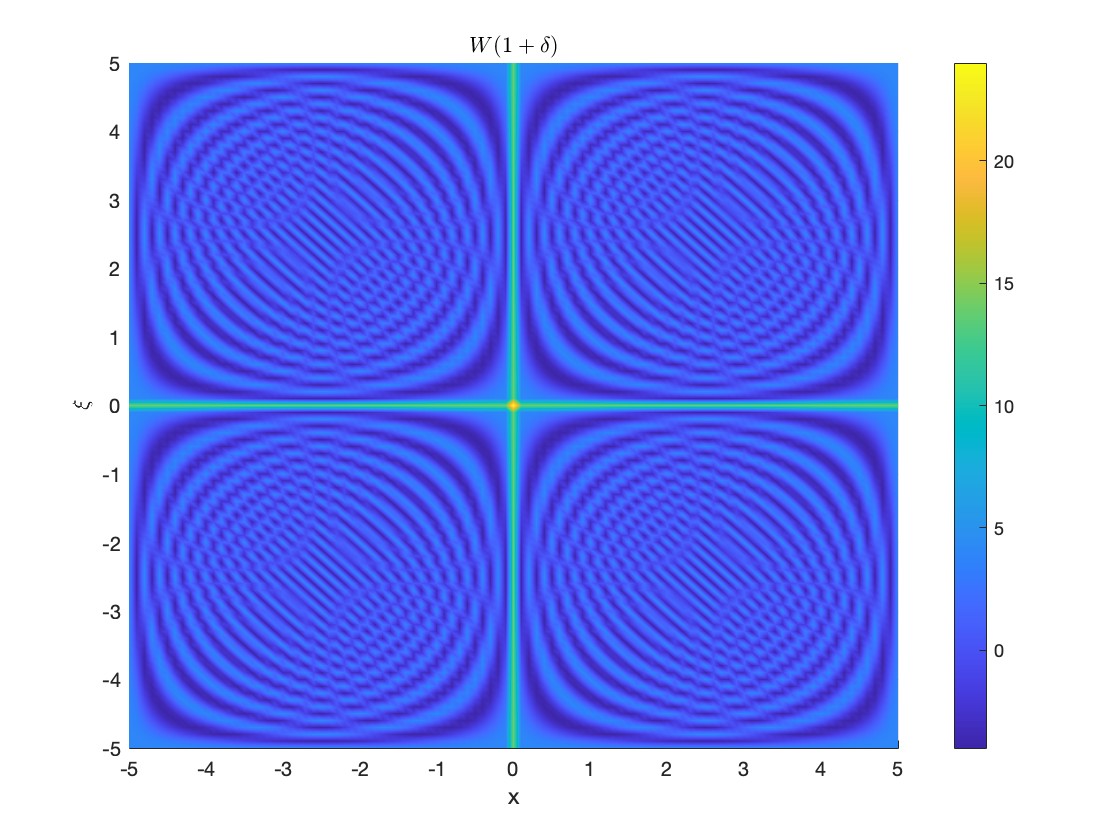}\includegraphics[width=0.5\textwidth]{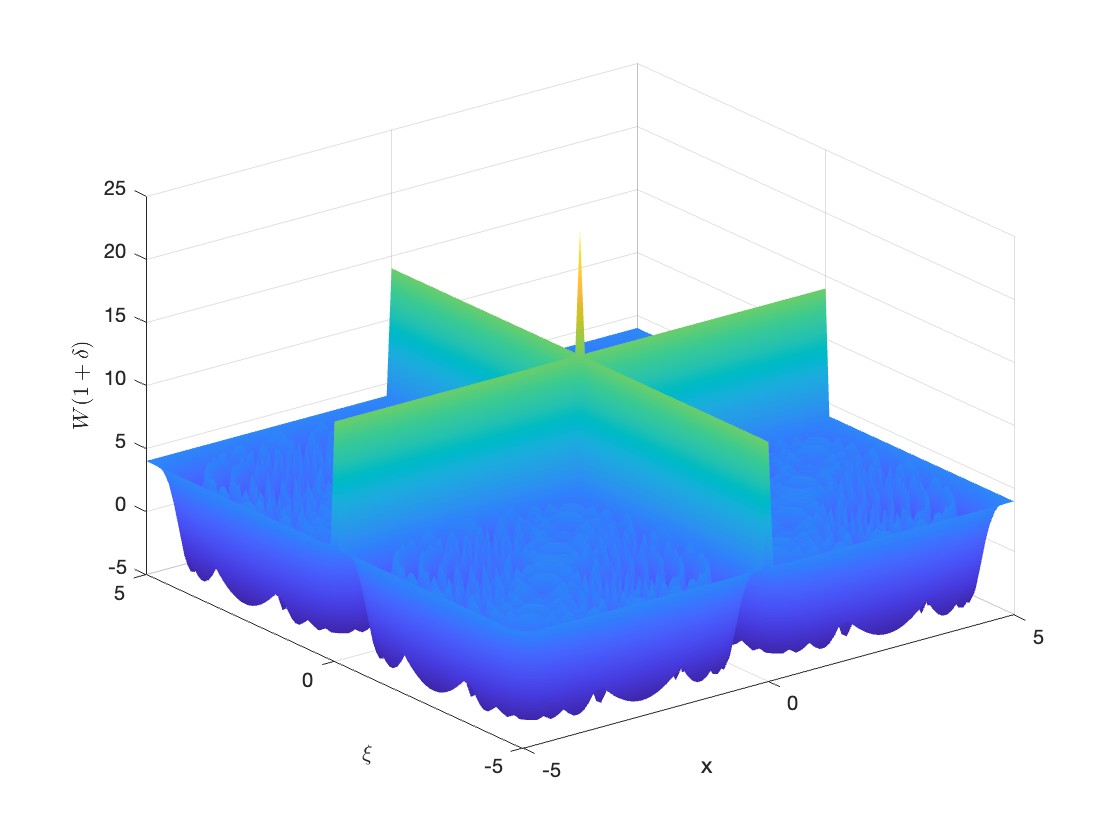}
		\caption{2D and 3D representations of $W(1+\delta)$.}
		\label{fig:1}
	\end{figure}

	Convolution with Gaussians reduce ghost frequencies, as it can be made evident by displaying the Husimi distribution of $1+\delta$, i.e., the convolution of $W(1+\delta)$ with a Gaussian kernel. To compute it, we use \eqref{husimi} as follows.
	\begin{align*}
		H(1+\delta)(x,\xi)&=|V_\f (1+\delta)(x,\xi)|^2=|V_\f 1(x,\xi)+V_\f\delta(x,\xi)|^2, \qquad x,\xi\in\rd.
	\end{align*}
	First, we compute $V_\f 1$ and $V_\f\delta$.
	\begin{align*}	
		V_\f1(x,\xi)&=\int_{\rd}\f(t-x)e^{-2\pi i\xi t}dt=e^{-2\pi ix\xi}\hat \f(\xi)=e^{-2\pi ix\xi} e^{-\pi |\xi|^2},
	\end{align*}
	$x,\xi\in\rd$. Using \eqref{fundidTF} and the previous identity,
	\begin{align*}
		V_\f\delta(x,\xi)=e^{-2\pi i\xi x}V_{\f}1(\xi,-x)=e^{-\pi |x|^2}, \qquad x,\xi\in\rd.
	\end{align*}
	Consequently,
	\begin{align*}
		H(1+\delta)(x,\xi)&=|e^{-\pi|x|^2}+e^{-2\pi i\xi x}e^{-\pi|\xi|^2}|^2\\
		&=(e^{-\pi|x|^2}+e^{-2\pi i\xi x}e^{-\pi|\xi|^2})(e^{-\pi|x|^2}+e^{2\pi i\xi x}e^{-\pi|\xi|^2})\\
		&=e^{-2\pi|x|^2}+e^{-\pi |\xi|^2}+2\cos(2\pi \xi x)e^{-\pi(|x|^2+|\xi|^2)}.
	\end{align*}
		\begin{figure}
		\includegraphics[width=0.5\textwidth]{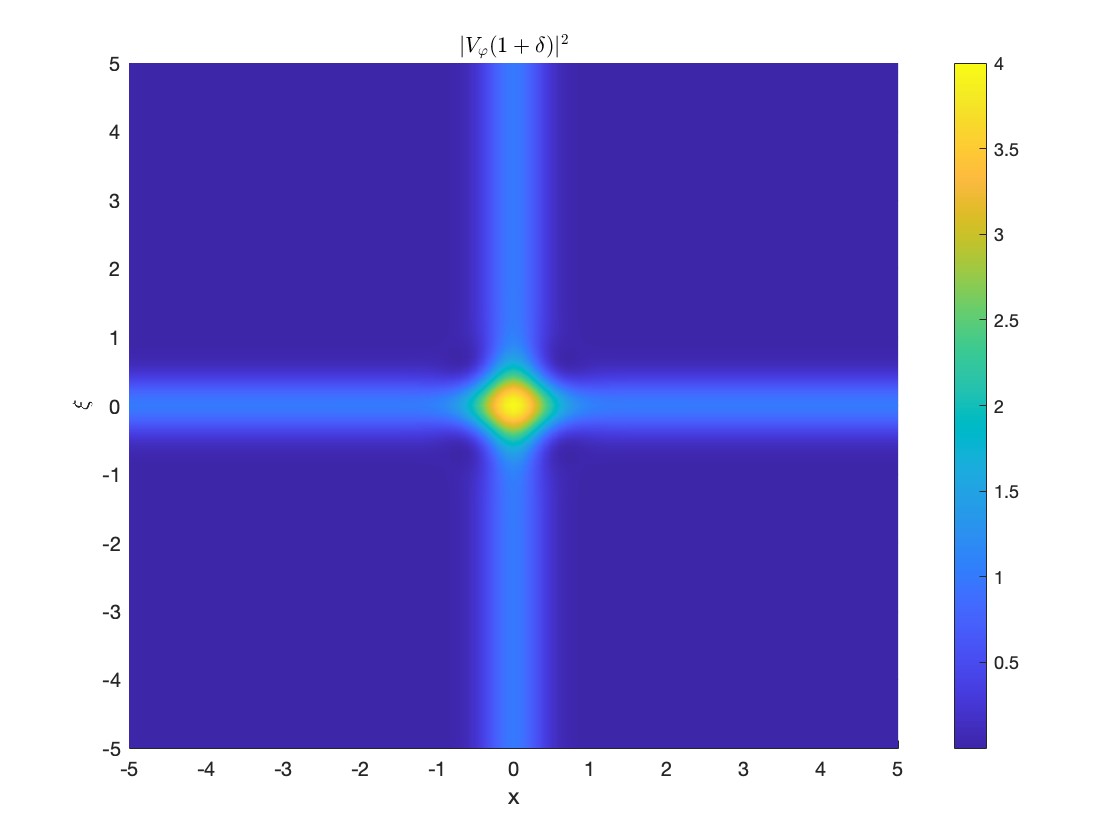}\includegraphics[width=0.5\textwidth]{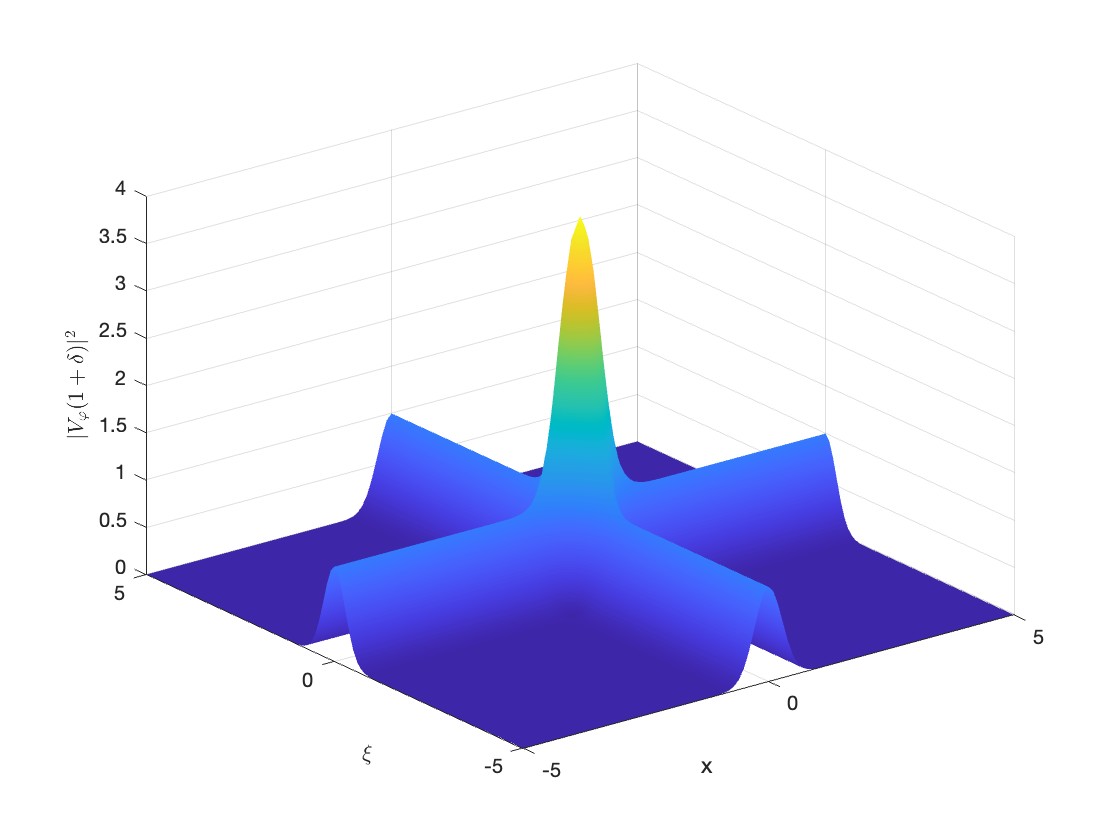}
		\caption{2D and 3D representations of $H(1+\delta)=|V_\f(1+\delta)|^2$.}
		\label{fig:2}
	\end{figure}
	Figure \ref{fig:2} displays the Husimi distribution of $1+\delta$, where the reduction of ghost frequencies, due to the convolution with the Gaussian, appears evident. The same nice abatement of ghost-frequencies cannot be achieved with Bessel-Sobolev potentials, as we discuss hereafter. 
	
	For, observe that:
	\begin{align*}
		(\delta\otimes 1)\ast e^{-|u|-|v|}(x,\xi)&=\int_{\rdd}e^{-|x|-|\xi-\eta|}d\eta=2\pi e^{-|x|}
	\end{align*}
	and, analogously, 
	\[
		(1\otimes\delta) \ast e^{-|u|-|v|}(x,\xi)=2\pi e^{-|\xi|}.
	\]
	Consequently,
	\begin{align*}
		W(1+\delta)\ast e^{-|u|-|v|}(x,\xi)&=[1\otimes\delta+\delta\otimes1+2^{d+1}\cos(2\pi uv)]\ast e^{-|u|-|v|}(x,\xi)\\
		&=2\pi(e^{-|x|}+e^{-|\xi|})+2^{d+1}\cos(2\pi uv)\ast e^{-|u|-|v|}(x,\xi).
	\end{align*}

	\begin{figure}
		\includegraphics[width=0.5\textwidth]{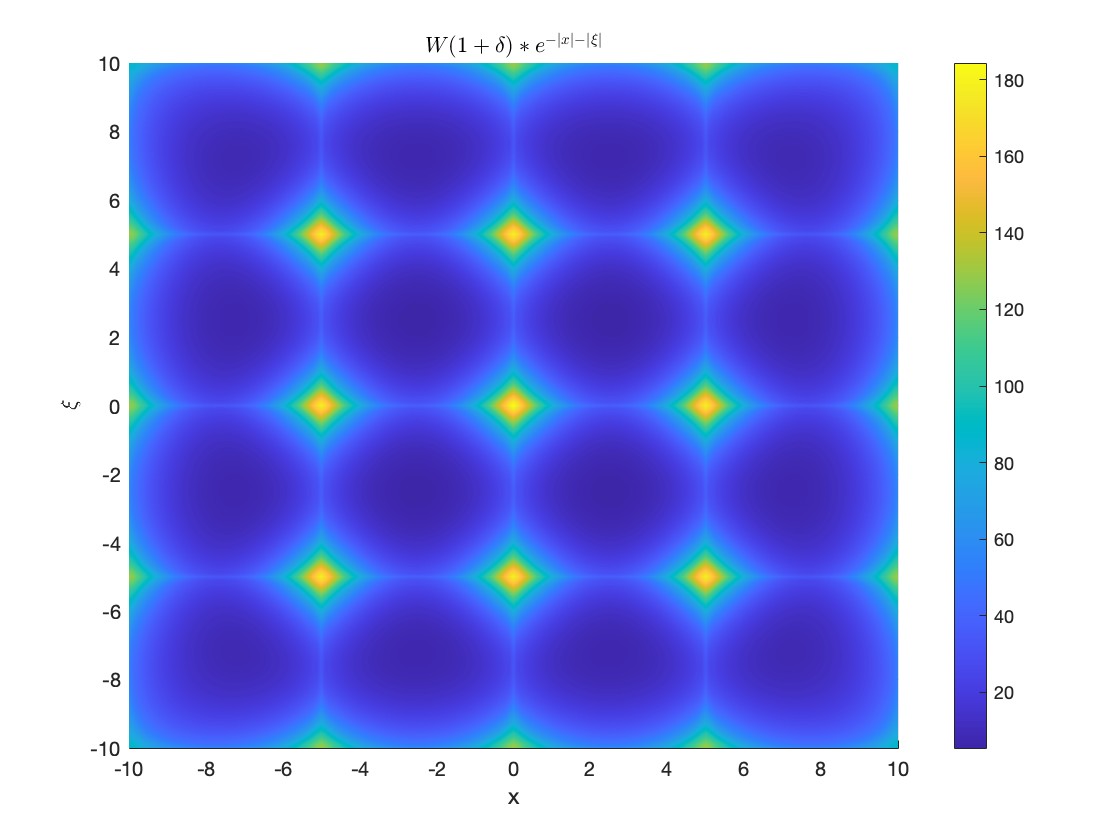}\includegraphics[width=0.5\textwidth]{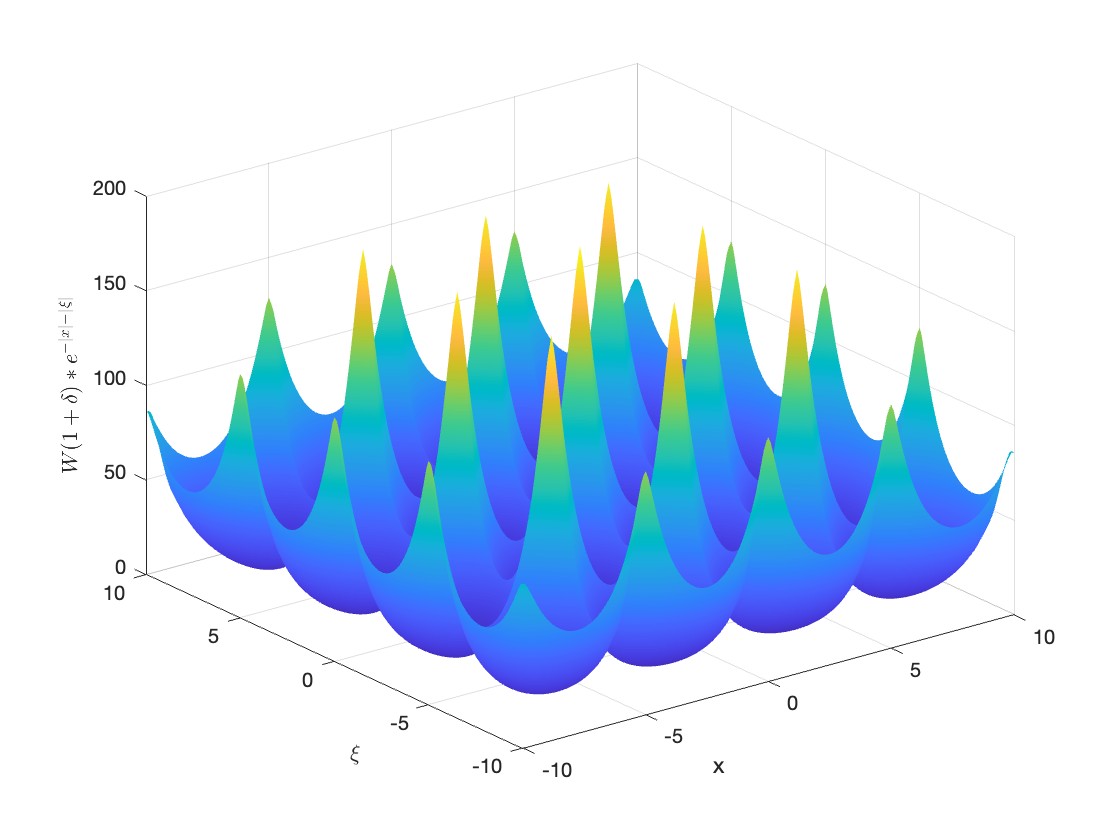}
		\caption{2D and 3D representations of $W(1+\delta)\ast e^{-|x|-|\xi|}$.}
		\label{fig:3}
	\end{figure}

	Figure \ref{fig:3} displays both the 2D and 3D graphic representations of $W(1+\delta)\ast e^{-|x|-|\xi|}$, showing the prominence of ghost frequencies. 
	
	\begin{figure}
		\includegraphics[width=0.5\textwidth]{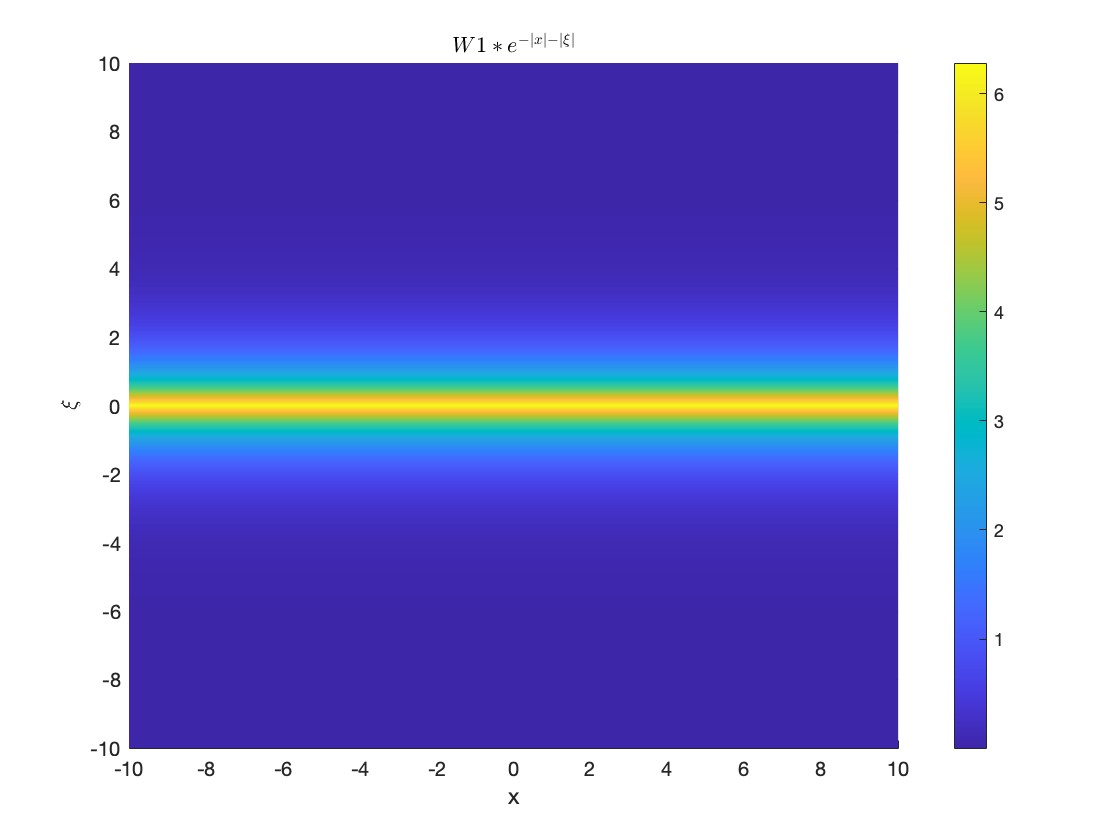}\includegraphics[width=0.5\textwidth]{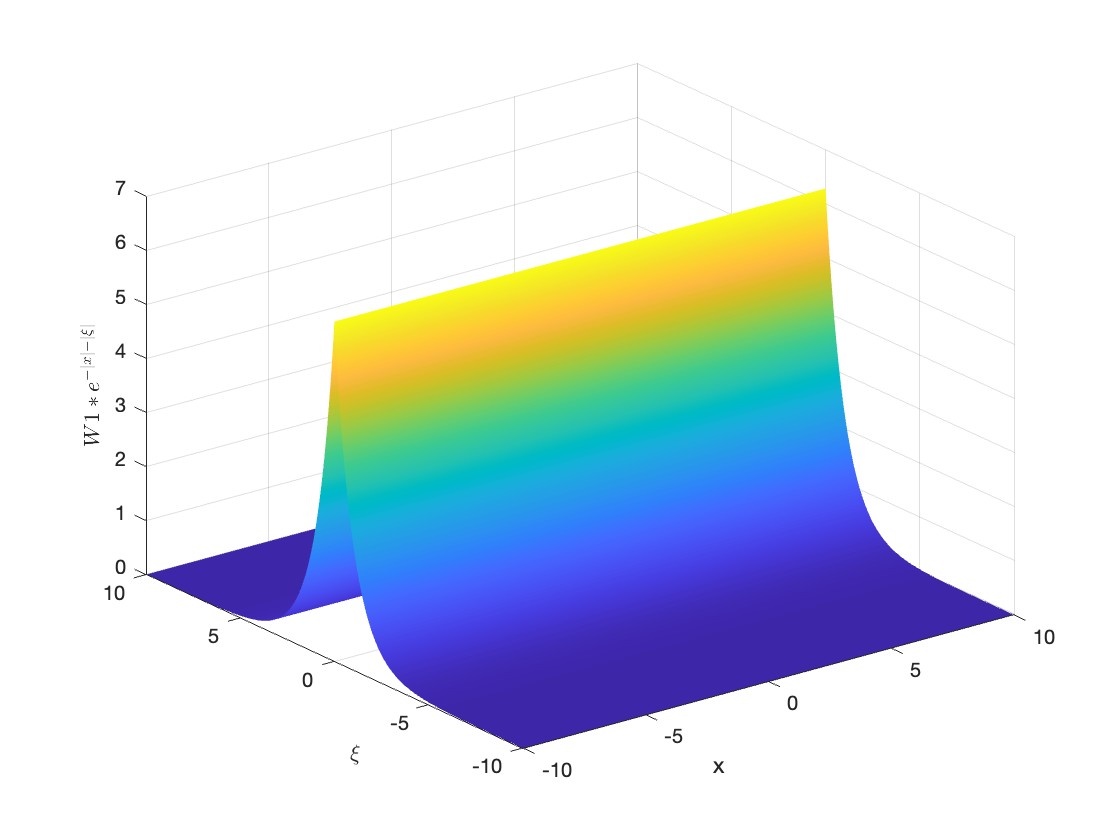}
		\caption{2D and 3D representations of $W1\ast e^{-|x|-|\xi|}$.}
		\label{fig:4}
	\end{figure}
		\begin{figure}
		\includegraphics[width=0.5\textwidth]{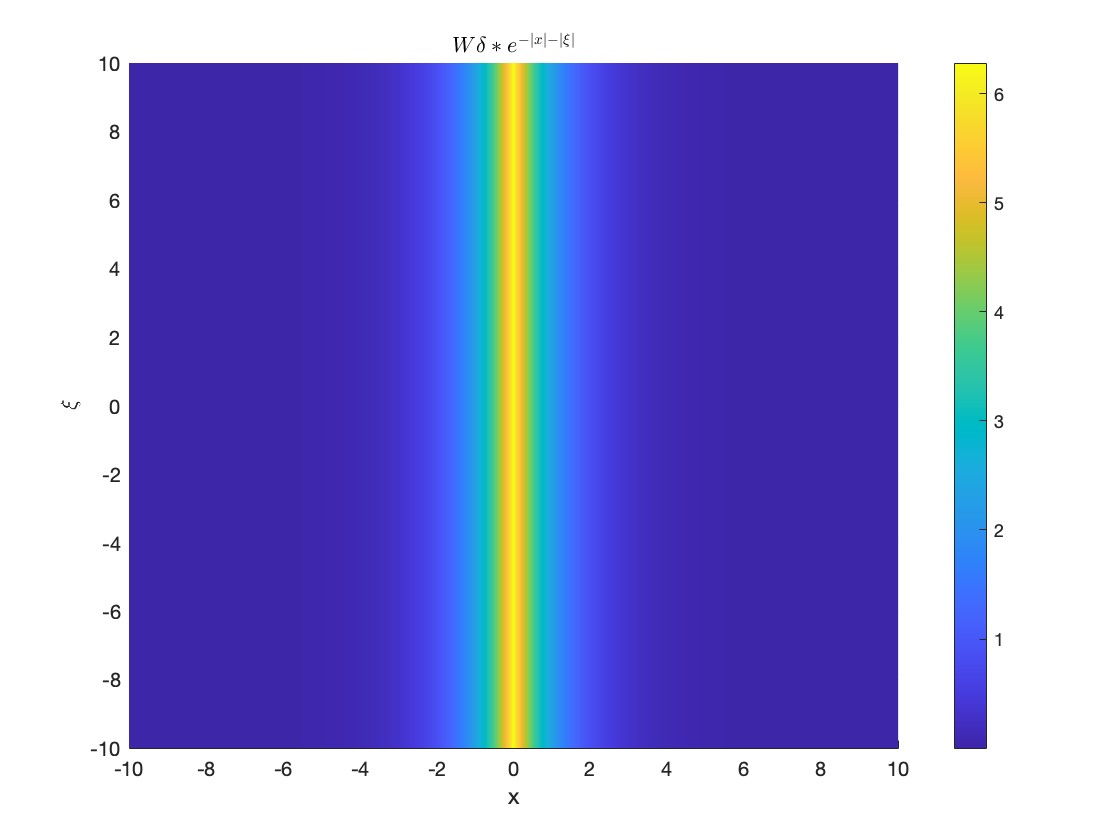}\includegraphics[width=0.5\textwidth]{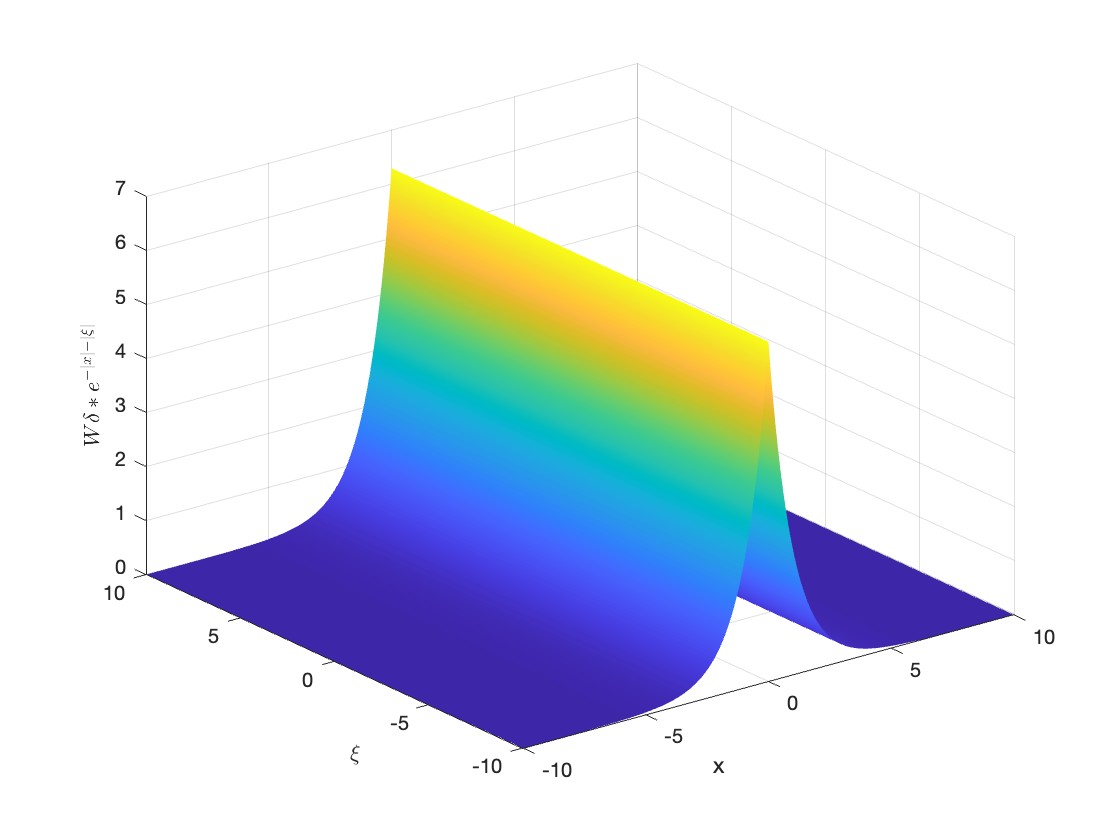}
		\caption{2D and 3D representations of $W\delta\ast e^{-|x|-|\xi|}$.}
		\label{fig:5}
	\end{figure}
	
\end{appendix}

\end{document}